\theoremstyle{plain}
\newtheorem{theorem}{Theorem}[section]
\newtheorem{lemma}[theorem]{Lemma}
\newtheorem{proposition}[theorem]{Proposition}
\theoremstyle{definition}
\newtheorem{definition}[theorem]{Definition}
\newtheorem{remark}[theorem]{Remark}
\title{The $p$-integrable Teichm\"uller space for $p \geqslant 1$}
\affiliation{1}{Department of Mathematics and Statistics, Jiangsu Normal University, Xuzhou 221116, PR China}
\affiliation{2}{Department of Mathematics, School of Education, Waseda University, Shinjuku, Tokyo 169-8050, Japan}
\begin{document}
\maketitle
\begin{abstract}
We verify that the $p$-integrable Teichm\"uller space $T_p$ admits the canonical complex Banach manifold structure for
any $p \geq 1$. Moreover, we characterize a quasisymmetric homeo\-morphism corresponding to
an element of $T_p$ in terms of the $p$-Besov space for any $p>1$.
\end{abstract}

\section{Introduction}

The {\it universal Teichm\"uller space} $T$ is a certain quotient space of
all normalized quasiconformal 
homeomorphisms of the unit disk $\mathbf D$
(or the upper half-plane $\mathbf U$). This is the total space of all other 
Teichm\"uller spaces defined by means of quasiconformal mappings.
See \cite{Ah, Le, Na}. 

A quasiconformal map $f$ of $\mathbf D$
into the complex plane $\mathbf C$ is
determined uniquely by its {\it complex dilatation} 
$\mu_f(z)=f_{\bar z}/f_z$
up to the post composition of a conformal map.
Then, the space $M(\mathbf D)$ of all {\it Belt\-rami coefficients} $\mu$, which are measurable functions
with $\Vert \mu \Vert_\infty<1$, is identified with the space of normalized 
quasiconformal maps on $\mathbf D$.
Let $f^{\mu}:\mathbf D \to \mathbf D$ denote
the quasiconformal homeomorphism 
with complex dilatation $\mu \in M(\mathbf D)$ normalized by fixing boundary points $1$, $-1$, and $-i$.

We say that $\mu$ and $\nu$ in $M(\mathbf D)$ are {\it Teichm\"uller equivalent} if
the extensions of $f^{\mu}$ and $f^{\nu}$ to the unit circle $\mathbf S$, which are {\it quasisymmetric homeomorphisms}, are the same.
Then, $T$ is the quotient space by this equivalence relation;
the quotient map $\pi:M(\mathbf D) \to T$ $(\pi(\mu)=[\mu])$ is called the
{\it Teichm\"uller projection}. The topology on $T$ is induced by the Teichm\"uller distance (see \cite[Sect. III.2]{Le}); equivalently,
$[\mu]$ converges to $[\nu]$ in $T$ if 
$$
\inf\,\{\Vert \mu \ast \nu^{-1}\Vert_\infty \mid \mu \in [\mu],\ \nu \in [\nu]\} \to 0,
$$
where $ \mu \ast \nu^{-1}$ stands for the complex dilatation of $f^\mu \circ (f^{\nu})^{-1}$.
We call this the {\it Teichm\"uller topology}.

Integrable Teichm\"uller spaces are subspaces of $T$
given by integrable Beltrami coefficients $\mu$ with respect to the hyperbolic metric on $\mathbf D$.

\begin{definition}
For $\mu \in M(\mathbf D)$ and $p>0$, let
$$
\Vert \mu \Vert_p=\left(\int_{\mathbf D} |\mu(z)|^p(1-|z|^2)^{-2}dxdy\right)^{\frac{1}{p}}.
$$  
The space of all Beltrami coefficients $\mu \in M(\mathbf D)$ with $\Vert \mu \Vert_p<\infty$
is denoted by $M_p(\mathbf D)$.
The {\it $p$-integrable Teichm\"uller space} $T_p$ is defined by $\pi(M_p(\mathbf D))$.
For $p \geq 1$, we regard $M_p(\mathbf D)$ 
as the open subset 
of the Banach space of norm $\Vert \mu \Vert_\infty+\Vert \mu \Vert_p$.
We equip $T_p$ with the Teichm\"uller topology induced by this norm. 
\end{definition}

For $p=2$, $T_2$ is also known as the Weil--Petersson Teichm\"uller space. This was
first introduced by Cui \cite{Cu} and developed by Takhtajan and Teo \cite{TT}.
It is equipped with a complex Hilbert mani\-fold structure and
with an invariant K\"ahler metric called the Weil--Petersson metric.
Shen \cite{Sh} 
characterized the quasisymmetric extension $h$ of a quasiconformal 
self-homeomorphism $f$ of $\mathbf D$ with $\mu_f \in M_2(\mathbf D)$
by the condition that $h$ is 
absolutely continuous and $\log h'$ belongs to the Sobolev space $H^{\frac{1}{2}}(\mathbf S)$.

For $p > 2$, $T_p$ was first considered by Guo \cite{Gu}. The complex Banach
manifold structure for $T_p$ was provided by Yanagishita \cite{Ya} including the case  
with the Fuchsian group action.
This was also done by Tang and Shen \cite{TS} 
where they also 
generalized the characterization of the quasisymmetric extension $h$ to $\mathbf S$
of a quasiconformal self-homeomorphism 
$f$ with $\mu_f \in M_p(\mathbf D)$. 
The $p$-Weil--Petersson metric was 
introduced and investigated in
\cite{Mat} and \cite{Ya2}.

In contrast, the case of $0 <p \leq 1$ was studied by Alberge and Brakalova \cite{AB} and
they in particular proved that the quasisymmetric extension $h$ is continuously differentiable in this case.

In this paper, we consider the case of $1 \leq p<2$ and prove that
$T_p$ is also endowed with the complex Banach manifold structure 
(Theorem \ref{Pcomplex}).
The results formulated on $\mathbf D$ and $\mathbf S$ can be suitably translated into
the upper half-plane $\mathbf U$ and the real line $\mathbf R$.
We also prove that 
a quasisymmetric homeomorphism $h:\mathbf R \to \mathbf R$ has 
a quasiconformal extension $f$ to $\mathbf U$ with $\mu_f$
%a quasiconformal self-homeomorphism of $\mathbf U$
%has its complex dilatation 
in $M_p(\mathbf U)$ for $p>1$ if and only if 
%its quasisymmetric extension $h$ to $\mathbf R$ 
$h$ is locally absolutely continuous
and $\log h'$ is in the $p$-Besov space $B_p(\mathbf R)$
(Theorem \ref{Besov}). The claim for $M_p(\mathbf D)$ and $B_p(\mathbf S)$ is also verified.
These results improve those in the recent papers \cite{LiS,LS}.

\section{The Bers embedding}

For $\mu \in M(\mathbf D)$, let $f_{\mu}$ be 
the quasiconformal self-homeo\-morphism of the extended complex plane $\widehat{\mathbf C}$ such that
its complex dilatation is $\mu$ on $\mathbf D$ and $0$ on the exterior disk 
$\mathbf D^*=\{z \in \mathbf C \mid |z|>1\} \cup \{\infty\}$.
We take the Schwarzian derivative $S_{f_\mu|_{\mathbf D^*}}$ of the conformal homeo\-morphism $f_\mu$ on $\mathbf D^*$.
This belongs to the complex Banach space $\mathcal A_\infty(\mathbf D^*)$ of 
all holomorphic maps
$\varphi$ on $\mathbf D^*$ with 
$
\Vert \varphi \Vert_{\mathcal A_\infty}=\sup_{z \in \mathbf D^*}\, (|z|^2-1)^2|\varphi(z)|<\infty.
$

A map $\Phi:M(\mathbf D) \to \mathcal A_\infty(\mathbf D^*)$ 
is defined by
$\mu \mapsto S_{f_\mu|_{\mathbf D*}}$, which we call the Bers Schwarzian derivative map.
Then, $\pi(\mu_1)=\pi(\mu_2)$ if and only if $\Phi(\mu_1)=\Phi(\mu_2)$ for $\mu_1,\mu_2 \in M(\mathbf D)$. 
Hence, there is
a well-defined injection $\beta:T \to \mathcal A_\infty(\mathbf D^*)$ such that $\beta \circ \pi=\Phi$,
which is called the {\it Bers embedding}.

The following property of $\Phi$ is  
well known.
See \cite[Th.V.5.3]{Le}, \cite[Sect.3.4, 3.5]{Na}.

\begin{proposition}\label{submersion}
$\Phi:M(\mathbf D) \to \mathcal A_\infty(\mathbf D^*)$ 
is a holomorphic split submersion onto
the image.
\end{proposition}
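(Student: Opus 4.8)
The plan is to reduce the proposition to the construction of a local holomorphic right inverse (section) of $\Phi$ through each point of its image. Indeed, a holomorphic map between open subsets of Banach spaces that admits such a section at every point is automatically a split submersion onto its image: if $s$ is a holomorphic section defined near $\Phi(\mu)$ with $\Phi \circ s = \mathrm{id}$ and $s(\Phi(\mu)) = \mu$, then its derivative $ds_{\Phi(\mu)}$ is a bounded right inverse of $d\Phi_\mu$, which forces $d\Phi_\mu$ to be surjective and its kernel to be complemented. So everything comes down to (i) holomorphy of $\Phi$ and (ii) the existence of these sections.

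For (i), I would rely on the holomorphic dependence of the normalized solution of the Beltrami equation on its coefficient (the Ahlfors--Bers theory): the restriction $f_\mu|_{\mathbf{D}^*}$ depends holomorphically on $\mu \in M(\mathbf{D})$, and passing to the Schwarzian derivative is a holomorphic operation into $\mathcal{A}_\infty(\mathbf{D}^*)$. Hence $\Phi$ is holomorphic on the open set $M(\mathbf{D})$.

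The substantive step is (ii) at the base point $0$, and the key tool is the Ahlfors--Weill extension. For $\varphi \in \mathcal{A}_\infty(\mathbf{D}^*)$ with $\Vert \varphi \Vert_{\mathcal{A}_\infty} < 1/2$, Nehari's theorem guarantees that the locally univalent map on $\mathbf{D}^*$ with Schwarzian $\varphi$ is univalent, and the Ahlfors--Weill reflection produces an explicit quasiconformal extension to $\mathbf{D}$ whose Beltrami coefficient, up to a standard normalization, has the form
$$
s(\varphi)(z) = -\frac{1}{2}(1-|z|^2)^2\, \varphi(1/\bar z)\, \bar z^{-4}, \qquad z \in \mathbf{D}.
$$
A direct estimate using $\Vert \varphi \Vert_{\mathcal{A}_\infty} < 1/2$ shows $\Vert s(\varphi) \Vert_\infty < 1/4$, so $s$ maps this ball into $M(\mathbf{D})$; moreover $\varphi \mapsto s(\varphi)$ is linear and bounded, hence holomorphic, $s(0) = 0$, and $\Phi \circ s = \mathrm{id}$ by the very construction of the Ahlfors--Weill map. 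This is exactly a local holomorphic section through $0 = \Phi(0)$, and it simultaneously shows that $\Phi(M(\mathbf{D}))$ contains a neighborhood of $0$, so the image is open.

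To obtain sections at an arbitrary $\mu_0 \in M(\mathbf{D})$, I would transport the one at $0$ by the right-translation automorphisms. Precomposition with $(f^{\mu_0})^{-1}$ induces a biholomorphic automorphism $\Theta$ of $M(\mathbf{D})$ with $\Theta(\mu_0) = 0$, and $\Phi$ is equivariant under these translations in the sense that there is a biholomorphism $\theta$ between a neighborhood of $\Phi(\mu_0)$ and a neighborhood of $0$ in $\mathcal{A}_\infty(\mathbf{D}^*)$, arising from the cocycle change of Schwarzians, with $\theta \circ \Phi = \Phi \circ \Theta$. Then $\Theta^{-1} \circ s \circ \theta$ is a holomorphic section of $\Phi$ through $\mu_0$, completing (ii) at every point. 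The main obstacle is the Ahlfors--Weill construction in step (ii): verifying Nehari univalence, that the explicit formula genuinely inverts $\Phi$, and the norm bound placing $s(\varphi)$ in $M(\mathbf{D})$; the holomorphy in (i) and the equivariance of the translations are comparatively formal consequences of the cited Ahlfors--Bers theory and the homogeneity of the universal Teichm\"uller space.
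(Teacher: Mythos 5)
Your reduction (holomorphy plus a local holomorphic section through every point implies split submersion), the Ahlfors--Bers holomorphy of $\mu \mapsto f_\mu|_{\mathbf D^*}$, and the Ahlfors--Weill section at the origin with the bound $\Vert s(\varphi) \Vert_\infty \leq \frac{1}{2}\Vert \varphi \Vert_{\mathcal A_\infty}$ are all correct, and they constitute the first half of the classical proof that the paper cites for this proposition (\cite[Th.V.5.3]{Le}, \cite[Sect.3.4, 3.5]{Na}). The gap is in your last step, the transport of the section from $0$ to an arbitrary $\mu_0$ by right translations. You need a map $\theta$, holomorphic on an \emph{open} neighborhood of $\Phi(\mu_0)$ in $\mathcal A_\infty(\mathbf D^*)$, satisfying $\theta \circ \Phi = \Phi \circ \Theta$. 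But such a $\theta$ is a priori defined only on the set $\Phi(M(\mathbf D))$, which at this stage is not known to be open near $\Phi(\mu_0)$ --- that openness is part of what is being proved --- and its holomorphy as a map between open subsets of $\mathcal A_\infty(\mathbf D^*)$ is exactly the statement that right translations act biholomorphically in Bers coordinates. That homogeneity statement is standardly a \emph{consequence} of this proposition, not an input: in this very paper, the biholomorphy of the right translation $R_{[\nu]}$ is derived at the end of Section 4 from Theorem \ref{Pcomplex} and Lemma \ref{localsection}. So your argument is circular at the only point where something substantive remained to be proved.

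Moreover, the ``cocycle change of Schwarzians'' does not produce a self-map of $\mathcal A_\infty(\mathbf D^*)$. For $\varphi$ near $\psi_0=\Phi(\mu_0)$, writing a locally univalent $g$ on the quasidisk $W^*=f_{\mu_0}(\mathbf D^*)$ with $S_{g \circ f_{\mu_0}|_{\mathbf D^*}} = \varphi$, the cocycle identity gives the affine isomorphism $\varphi \mapsto S_g = \bigl[(\varphi - \psi_0)/(f_{\mu_0}')^2\bigr] \circ f_{\mu_0}^{-1}$, whose target is the space of bounded quadratic differentials on $W^*$, not on $\mathbf D^*$. To invert $\Phi$ from there one must extend $g$ quasiconformally across the quasicircle $f_{\mu_0}(\mathbf S)$, and since $W^*$ is not a round disk the reflection $z \mapsto 1/\bar z$ behind Ahlfors--Weill is unavailable; one genuinely needs a quasiconformal reflection over the quasicircle (Earle--Nag \cite{EN}, \cite[Sect.II.4]{Le}). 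This is precisely how the cited references complete the proof, and how the paper itself argues in the $T_p$ setting in Lemma \ref{localsection}. In short: your proof is complete only at the base point $0$ (where it does show the image contains a neighborhood of $0$); at general base points the missing ingredient is the quasiconformal reflection, and it cannot be replaced by translation equivariance.
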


This proposition in particular implies (a) $\Phi$ is continuous; (b) 
$\Phi$ has a local continuous right inverse at every point in the image.
By the formula
\begin{equation}\label{chain}
\mu \ast \nu^{-1}(f^\nu(z))=\frac{\mu(z)-\nu(z)}{1-\overline{\nu(z)}\mu(z)}\cdot\frac{\partial f^\nu(z)}{\overline{\partial f^\nu(z)}},
\end{equation}
we see that the quotient topology on $T$ induced by $\pi$ coincides with the Teichm\"uller topology.
Then, (a) is equivalent to saying that $\beta$ is continuous, and (b) implies that
$\beta^{-1}$ is continuous.
Hence, we have:

\begin{proposition}\label{Tcomplex}
The Bers embedding $\beta:T \to \mathcal A_\infty(\mathbf D^*)$ is a homeomorphism onto 
the open subset 
$\Phi(M(\mathbf D))$. Thus, $T$ is endowed with
the complex structure modeled on $\mathcal A_\infty(\mathbf D^*)$.
The Teichm\"uller projection $\pi:M(\mathbf D) \to T$ is holomorphic with
a local holomorphic right inverse at every point of $T$.
\end{proposition}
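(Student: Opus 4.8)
The plan is to deduce everything from Proposition \ref{submersion} together with the defining relation $\beta \circ \pi = \Phi$ and the fact, already established in the excerpt, that $\beta$ is a bijection onto $\Phi(M(\mathbf D))$. First I would record that, by the chain rule formula \eqref{chain}, the Teichm\"uller topology on $T$ coincides with the quotient topology induced by $\pi$; this makes $\pi$ a topological quotient map, so a map out of $T$ is continuous precisely when its precomposition with $\pi$ is continuous. Applying this universal property to $\beta$ and using consequence (a), that $\Phi = \beta \circ \pi$ is continuous, yields that $\beta$ is continuous.

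Next, for openness of the image and continuity of $\beta^{-1}$, I would invoke consequence (b): at each $\varphi_0 = \Phi(\mu_0)$ in the image there is a continuous right inverse $s$ of $\Phi$ defined on an open neighborhood $V$ of $\varphi_0$ in $\mathcal A_\infty(\mathbf D^*)$ with $\Phi \circ s = \mathrm{id}_V$. Since $\Phi(s(\varphi)) = \varphi$ for every $\varphi \in V$, we get $V \subseteq \Phi(M(\mathbf D))$, so $\Phi(M(\mathbf D))$ is open. Moreover $\pi \circ s : V \to T$ is continuous and satisfies $\beta \circ (\pi \circ s) = \Phi \circ s = \mathrm{id}_V$; as $\beta$ is injective, this exhibits $\pi \circ s$ as a continuous local inverse of $\beta$ near $\beta([\mu_0])$, proving that $\beta^{-1}$ is continuous. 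Hence $\beta$ is a homeomorphism onto the open set $\Phi(M(\mathbf D))$, and one transports the complex structure of $\mathcal A_\infty(\mathbf D^*)$ to $T$ through $\beta$, which thereby becomes a biholomorphic chart onto its image.

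Finally, for the holomorphy statements I would use that the local right inverses furnished by the split submersion are in fact holomorphic. With the transported structure, $\pi = \beta^{-1} \circ \Phi$ is holomorphic, being the composition of the holomorphic map $\Phi$ with the biholomorphic chart $\beta^{-1}$. For the local holomorphic right inverse of $\pi$ at a point $[\mu_0]$, I would set $\sigma = s \circ \beta$, where $s$ is a local holomorphic section of $\Phi$ near $\Phi(\mu_0)$; then $\pi \circ \sigma = \beta^{-1} \circ \Phi \circ s \circ \beta = \mathrm{id}$ on a neighborhood of $[\mu_0]$, and $\sigma$ is holomorphic as a composition of holomorphic maps.

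The analytic heart of the argument---the surjectivity and the splitting of the derivative $D\Phi$---is already absorbed into Proposition \ref{submersion}, so no genuinely hard estimate remains. The only point requiring care is the topological bookkeeping: one must verify that the Teichm\"uller and quotient topologies agree, so that the universal property of $\pi$ applies, and that the local sections from (b) are defined on open subsets of $\mathcal A_\infty(\mathbf D^*)$ rather than merely on the image. It is precisely this open domain of definition that simultaneously delivers the openness of $\Phi(M(\mathbf D))$ and the continuity of $\beta^{-1}$.
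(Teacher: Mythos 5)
Your proposal is correct and follows essentially the same route as the paper: the paper likewise derives the proposition from Proposition \ref{submersion} by noting that formula \eqref{chain} identifies the Teichm\"uller topology with the quotient topology, that consequence (a) then gives continuity of $\beta$, that consequence (b) gives openness of the image and continuity of $\beta^{-1}$, and that the holomorphic statements follow from the split submersion property. Your write-up merely makes explicit the bookkeeping (the universal property of the quotient map, $\pi\circ s$ as local inverse of $\beta$, and $\sigma=s\circ\beta$ as local holomorphic section of $\pi$) that the paper leaves implicit.
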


We restrict $\Phi$ to $M_p(\mathbf D)$.
Let
\begin{align*}
{\mathcal A}_p(\mathbf D^*)&
=\{\varphi \in \mathcal A_\infty(\mathbf D^*) \mid \Vert \varphi \Vert_{\mathcal A_p}<\infty\};\\
\Vert \varphi \Vert_{\mathcal A_p}&=\left(\int_{\mathbf D^*} |\varphi(z)|^p(|z|^2-1)^{2p-2}dxdy\right)^{\frac{1}{p}}.
\end{align*}
For $p \geq 1$, ${\mathcal A}_p(\mathbf D^*)$ is a complex Banach space with this norm.
There is a constant $c_p>0$ depending only on $p$ such that
$\Vert \varphi \Vert_{\mathcal A_\infty} \leq c_p \Vert \varphi \Vert_{\mathcal A_p}$.
We will see in Lemma \ref{TTholo} below that $\Phi(M_p(\mathbf D)) \subset {\mathcal A}_p(\mathbf D^*)$.

We also consider the Bers embedding $\beta$ on the $p$-integrable Teichm\"uller space $T_p=\pi(M_p(\mathbf D))$.
Then, $\beta:T_p \to  {\mathcal A}_p(\mathbf D^*)$ is an injection onto $\Phi(M_p(\mathbf D))$.
The topology on $T_p$ is the Teichm\"uller topology induced from $M_p(\mathbf D)$ in which $[\mu]$ converges to $[\nu]$ in $T_p$ if
$$
\inf\,\{\Vert \mu \ast \nu^{-1}\Vert_p+\Vert \mu \ast \nu^{-1}\Vert_\infty \mid \mu \in [\mu],\ \nu \in [\nu]\} \to 0.
$$
This turns out to be the same as the topology defined by replacing $\Vert \mu \ast \nu^{-1}\Vert_p+\Vert \mu \ast \nu^{-1}\Vert_\infty$ with
$\Vert \mu \ast \nu^{-1}\Vert_p$.

\section{Bi-Lipschitz quasiconformal maps}

We show that there is some $\nu \in M_p(\mathbf D)$ in every Teich\-m\"uller class $[\mu] \in T_p$
such that $f^\nu$ is a bi-Lipschitz 
self-diffeomorphism of $\mathbf D$ in the hyperbolic metric.
We adapt several claims in Takhtajan and Teo \cite{TT} for $p=2$ to the general case $p \geq 1$.

\begin{lemma}\label{group}
Let $\mu, \nu \in M_p(\mathbf D)$ for $p \geq 1$ and assume that
$f^\nu$ is a bi-Lipschitz self-homeomorphism 
of $\mathbf D$ in the hyperbolic metric.
Then, $\mu \ast \nu^{-1}$ belongs to $M_p(\mathbf D)$. 
In particular, the complex dilatation $\nu^{-1}$ of $(f^{\nu})^{-1}$ is in $M_p(\mathbf D)$. 
Moreover,
$\mu$ converges to $\nu$ in $M_p(\mathbf D)$ if and only if
$\mu \ast \nu^{-1}$ converges to $0$.
\end{lemma}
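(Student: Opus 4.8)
The plan is to reduce everything to a single change of variables in the integral defining $\Vert\cdot\Vert_p$, combined with the chain rule \eqref{chain} and the bi-Lipschitz hypothesis. First I would record the infinitesimal consequence of that hypothesis. Since the hyperbolic area element is comparable to $(1-|z|^2)^{-2}dxdy$ and $f^\nu$ is quasiconformal, hence differentiable a.e.\ with Jacobian $J_{f^\nu}=|(f^\nu)_z|^2(1-|\nu|^2)$, the assumption that $f^\nu$ is bi-Lipschitz in the hyperbolic metric is equivalent to the existence of a constant $L\geq 1$ with
\[
\frac{1}{L}\leq \frac{(1-|z|^2)\,|(f^\nu)_z(z)|}{1-|f^\nu(z)|^2}\leq L
\]
for a.e.\ $z\in\mathbf D$; the two factors $1\pm|\nu|$ coming from the semi-axes of the infinitesimal ellipse are absorbed into $L$ because $\Vert\nu\Vert_\infty<1$.

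For the first assertion, I would compute $\Vert\mu\ast\nu^{-1}\Vert_p^p$ by substituting $w=f^\nu(z)$. By \eqref{chain} the modulus of the integrand at $w=f^\nu(z)$ equals $|(\mu-\nu)/(1-\overline{\nu}\mu)|^p$, while the substitution produces the factor $J_{f^\nu}(z)\,(1-|f^\nu(z)|^2)^{-2}$, which by the displayed bound is at most $L^2(1-|z|^2)^{-2}$. Since $|1-\overline{\nu}\mu|\geq 1-\Vert\mu\Vert_\infty\Vert\nu\Vert_\infty>0$ and $(a+b)^p\leq 2^{p-1}(a^p+b^p)$ for $p\geq1$, this bounds $\Vert\mu\ast\nu^{-1}\Vert_p^p$ by a constant multiple of $\Vert\mu\Vert_p^p+\Vert\nu\Vert_p^p$, proving $\mu\ast\nu^{-1}\in M_p(\mathbf D)$. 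Taking $\mu=0$, so that $f^0=\mathrm{id}$ and $0\ast\nu^{-1}=\nu^{-1}$, yields $\nu^{-1}\in M_p(\mathbf D)$ as the stated special case.

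For the final equivalence, the forward direction is the same estimate made quantitative: the computation above gives $\Vert\mu\ast\nu^{-1}\Vert_p\leq C\,\Vert\mu-\nu\Vert_p$ together with $\Vert\mu\ast\nu^{-1}\Vert_\infty\leq \Vert\mu-\nu\Vert_\infty/(1-\Vert\mu\Vert_\infty\Vert\nu\Vert_\infty)$, where the constants stay uniform because $\Vert\mu\Vert_\infty$ remains bounded away from $1$ once $\mu$ is close to $\nu$ in the sup norm. For the converse I would invert \eqref{chain} to obtain $|\mu-\nu|=|(\mu\ast\nu^{-1})\circ f^\nu|\cdot|1-\overline{\nu}\mu|\leq 2\,|(\mu\ast\nu^{-1})\circ f^\nu|$, integrate, and substitute back $z=(f^\nu)^{-1}(w)$. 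The key point is that $(f^\nu)^{-1}$ is again bi-Lipschitz in the hyperbolic metric, so the analogue of the displayed bound for $(f^\nu)^{-1}$ controls the new Jacobian factor and gives $\Vert\mu-\nu\Vert_p\leq C'\,\Vert\mu\ast\nu^{-1}\Vert_p$, with the trivial sup-norm bound handling the $\Vert\cdot\Vert_\infty$ part.

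I expect the main obstacle to be the first step: turning the \emph{global} bi-Lipschitz hypothesis on hyperbolic distances into the \emph{pointwise} a.e.\ bound on the conformal distortion factor, and then carrying out the two changes of variables cleanly, namely the absolute continuity of the quasiconformal substitution, the identity $J_{f^\nu}=|(f^\nu)_z|^2(1-|\nu|^2)$, and the bookkeeping of the $1\pm|\nu|$ factors. Once that bound is secured, both inclusions and both directions of the convergence statement follow from the same one-line integral estimate applied to $f^\nu$ and to $(f^\nu)^{-1}$ respectively.
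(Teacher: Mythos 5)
Your proposal is correct and takes essentially the same route as the paper: both reduce the lemma to formula \eqref{chain} plus the change of variables $w=f^\nu(z)$, using the bi-Lipschitz hypothesis to control the hyperbolic Jacobian a.e.\ and the bound $1-\Vert\mu\Vert_\infty\Vert\nu\Vert_\infty \leq |1-\bar\nu\mu| \leq 2$ to obtain the two-sided estimate $C^{-1}\Vert\mu-\nu\Vert_p \leq \Vert\mu\ast\nu^{-1}\Vert_p \leq C\Vert\mu-\nu\Vert_p$, from which membership in $M_p(\mathbf D)$, the special case $\mu=0$, and both directions of the convergence statement follow. Your write-up merely makes explicit two points the paper leaves implicit, namely the passage from the global bi-Lipschitz condition to the pointwise derivative bound and the sup-norm half of the equivalence.
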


\begin{proof} By formula \eqref{chain} and change of variables, 
\begin{align*}
&\Vert \mu \ast \nu^{-1} \Vert_p^p=\int_{\mathbf D}
\left|\frac{\mu-\nu}{1-\bar \nu \mu}\right|^p \circ (f^{\nu})^{-1}(z)\frac{dxdy}{(1-|z|^2)^2}\\
&\leq C
\int_{\mathbf D}|\mu(\zeta)-\nu(\zeta)|^p\frac{d\xi d\eta}{(1-|\zeta|^2)^2}=C\Vert \mu-\nu \Vert^p_p,
\end{align*}
and $\Vert \mu \ast \nu^{-1} \Vert_p^p \geq C^{-1}\Vert \mu-\nu \Vert^p_p$
for a constant $C \geq 1$ depending only on $\Vert \nu \Vert_\infty$ and the bi-Lipschitz constant of $f^\nu$.
\end{proof}

By this lemma,
any $\nu \in M_p(\mathbf D)$ such that $f^\nu$ is bi-Lipschitz defines a map $r_\nu:M_p(\mathbf D) \to M_p(\mathbf D)$ by
$r_\nu(\mu)=\mu \ast \nu^{-1}$ for $\mu \in M_p(\mathbf D)$. A similar computation shows that
there is a constant $C >0$ with the same dependence as above such that
$$
\Vert r_\nu(\mu_1)-r_\nu(\mu_2) \Vert_p \leq C \Vert \mu_1 -\mu_2 \Vert_p
$$
for any $\mu_1, \mu_2 \in M_p(\mathbf D)$. The same inequality is true for $\Vert \cdot \Vert_\infty$.
Hence, $r_\nu$ is continuous. By considering $(r_\nu)^{-1}=r_{\nu^{-1}}$,
we see that $r_\nu$ is a homeomorphic (in fact biholomorphic) automorphism of $M_p(\mathbf D)$.

\begin{lemma}\label{TTholo}
For $p \geq 1$, 
there exists a constant $C>0$ such that
$\Vert \Phi(\mu) \Vert_{\mathcal A_p} \leq C \Vert \mu \Vert_p$
for every $\mu \in M_p(\mathbf D)$.
Moreover, 
%if $f^\nu$ is bi-Lip\-schitz for $\nu \in M_p(\mathbf D)$, then 
$\Phi:M_p(\mathbf D) \to {\mathcal A}_p(\mathbf D^*)$ is holomorphic.
\end{lemma}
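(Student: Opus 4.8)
The plan is to extract everything from a single weighted singular-integral estimate, which I would prove first; the holomorphy of $\Phi\colon M_p(\mathbf D)\to\mathcal A_p(\mathbf D^*)$ then follows almost formally. For the holomorphy I would combine local boundedness with weak holomorphy. By Proposition~\ref{submersion}, $\Phi\colon M(\mathbf D)\to\mathcal A_\infty(\mathbf D^*)$ is holomorphic, and for every fixed $z_0\in\mathbf D^*$ the evaluation $\varphi\mapsto\varphi(z_0)$ is a bounded linear functional on $\mathcal A_\infty(\mathbf D^*)$; hence $\mu\mapsto\Phi(\mu)(z_0)$ is holomorphic on $M(\mathbf D)$. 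Since $\Vert\varphi\Vert_{\mathcal A_\infty}\leqslant c_p\Vert\varphi\Vert_{\mathcal A_p}$, these evaluations restrict to continuous functionals on $\mathcal A_p(\mathbf D^*)$, and they separate points. A locally bounded map into a complex Banach space that becomes holomorphic after composition with each member of a separating family of continuous functionals is itself holomorphic. Thus, once the norm inequality shows that $\Phi$ sends $M_p(\mathbf D)$ into $\mathcal A_p(\mathbf D^*)$ and is bounded on each ball $\{\Vert\mu\Vert_p\leqslant R\}$, holomorphy is immediate, and the whole content is the estimate.

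For the estimate I would integrate the derivative of $\Phi$ along the segment $t\mapsto t\mu$. Since $\Phi(0)=0$, one has $\Phi(\mu)=\int_0^1 D\Phi(t\mu)[\mu]\,dt$, so it suffices to bound $\Vert D\Phi(\nu)[\dot\mu]\Vert_{\mathcal A_p}\leqslant C\Vert\dot\mu\Vert_p$ with $C=C(\Vert\nu\Vert_\infty)$, uniformly for $\nu=t\mu$. At $\nu=0$ the Ahlfors--Bers variational formula gives the model operator
\[
L[\dot\mu](z)=\frac{6}{\pi}\int_{\mathbf D}\frac{\dot\mu(w)}{(w-z)^4}\,du\,dv,\qquad z\in\mathbf D^*.
\]
Substituting out the weights (replace $\dot\mu$ by $\dot\mu\,(1-|w|^2)^{-2/p}$ and $L[\dot\mu]$ by $L[\dot\mu]\,(|z|^2-1)^{2-2/p}$) turns $\Vert L[\dot\mu]\Vert_{\mathcal A_p}\leqslant C_p\Vert\dot\mu\Vert_p$ into the $L^p(\mathbf D,dA)\to L^p(\mathbf D^*,dA)$ boundedness of the positive kernel $(|z|^2-1)^{2-2/p}(1-|w|^2)^{2/p}|w-z|^{-4}$, which I would verify by Schur's test with power weights, using the Forelli--Rudin-type bound that $\int_{\mathbf D}(1-|w|^2)^s|w-z|^{-4}\,dA(w)$ is comparable to $(|z|^2-1)^{s-2}$ for $-1<s<2$ and its reflection across $\mathbf S$. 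For the paper's range $1\leqslant p<2$ the symmetric weights $(1-|w|^2)^{-1/p'}$ and $(|z|^2-1)^{-1/p'}$ (with $p'$ the conjugate exponent) already keep both resulting exponents inside $(-1,2)$. I would stress that the endpoint $p=1$ is unproblematic precisely because the kernel is positive: the $L^1$ bound reduces to the single uniform estimate $\sup_{w}\int_{\mathbf D^*}(1-|w|^2)^2|w-z|^{-4}\,dA(z)<\infty$, supplied by the same computation. This is what makes the argument run for all $p\geqslant1$ at once.

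The remaining, and genuinely harder, step is the derivative estimate at a general base point $\nu=t\mu$, which I expect to be the main obstacle. Here the variational formula produces the same singular integral transported by $f_\nu$: after the change of variables $\zeta=f_\nu(w)$ in the domain and $\xi=f_\nu(z)$ in the range, $D\Phi(\nu)[\dot\mu]$ becomes an integral with kernel $(\zeta-\xi)^{-4}$ over the quasidisk $\Omega=f_\nu(\mathbf D)$, evaluated at $\xi$ in the exterior quasidisk $\Omega^*=f_\nu(\mathbf D^*)$, with a transported density and Jacobian factors. Because $f_\nu$ is conformal on $\mathbf D^*$, the Koebe distortion theorem identifies $(|z|^2-1)\,|f_\nu'(z)|$ with the boundary distance in $\Omega^*$ up to fixed constants, so both weights pass to the boundary-distance weights of $\Omega$ and $\Omega^*$, and the estimate reduces to the same Schur computation on a quasidisk rather than on the round disk. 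The difficulty is to show that the Forelli--Rudin bounds survive on quasidisks with constants depending only on the quasiconstant, that is, on $\Vert\nu\Vert_\infty$; this should follow from the uniform (John) geometry of quasidisks together with the quasi-invariance of the hyperbolic metric, but it requires care to keep every constant controlled by $\Vert\nu\Vert_\infty$ alone. Carrying $C=C(p,\Vert\mu\Vert_\infty)$ through the integration in $t$ yields $\Vert\Phi(\mu)\Vert_{\mathcal A_p}\leqslant C(p,\Vert\mu\Vert_\infty)\Vert\mu\Vert_p$, which already gives the local boundedness used above and hence the holomorphy; obtaining the constant uniformly over all of $M_p(\mathbf D)$, as stated, is the one point where the integration-of-the-derivative scheme is lossy, and I would expect it to need an additional ingredient, with the universal bound $\Vert\Phi(\mu)\Vert_{\mathcal A_\infty}\leqslant 6$ controlling the part of $\mathbf D^*$ away from $\mathbf S$ independently of $\Vert\nu\Vert_\infty$.
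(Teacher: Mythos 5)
Your proposal takes essentially the same route as the paper: the paper proves the norm estimate exactly by your integration-of-the-derivative scheme, citing Takhtajan--Teo \cite{TT} (Chap.~1, Lem.~2.9 and the transported $(w-z)^{-4}$ kernel formula on their p.~25, handled with H\"older's inequality for general $p\geq 1$), and it obtains holomorphy just as you do, from local boundedness plus G\^ateaux/weak holomorphy via Proposition \ref{submersion} and a separating family of continuous functionals. The only difference is one of presentation: you reconstruct the Schur-test details and flag the constant's dependence on $\Vert\mu\Vert_\infty$, whereas the paper delegates precisely that part to \cite{TT}.
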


\begin{proof}
We can verify the first claim by \cite[Chap.1, Lem.2.9]{TT}.
Although this reference is stated for $p=2$, the argument works
for any $p \geq 1$. 
More explicitly, the first part of the proof of Theorem 2.3 in
\cite{TT} can be modified to $p \geq 1$ if we apply the H\"older inequality to the formula in Page 25.
Then, Remark 2.4 is also true for $p \geq 1$. 
This gives an estimate of the derivative of $\Phi$, and thus the first claim of our lemma follows.
This in particular implies that $\Phi$ is locally bounded.

To show that $\Phi$ is holomorphic, it suffices to see that 
$\Phi$ is G\^ateaux holomorphic.
%(see \cite[p.28]{Bour}).
As in \cite[Lem.V.5.1]{Le}, we can verify this by a usual argument (see Remark \ref{usual} below), for which
we may rely on Proposition \ref{submersion}. 
%Corollary 2.6 is true under the assumption that $f^\nu$ is bi-Lip\-schitz,
%and so is Remark 2.8. 
%Our second claim as well as Lemma 2.9 for $p \geq 1$
%is proved as the application of these generalized
%Remarks 2.4 and 2.8 with Lemma \ref{group}.
\end{proof}

\begin{remark}
In other papers \cite{Cu, Gu, Tang}, the claim that $\Phi:M_p(\mathbf D) \to {\mathcal A}_p(\mathbf D^*)$
is continuous (and hence holomorphic) was proved for $p \geq 2$. This is due to
the integral representation of the Schwarzian derivative
formulated by Astala and Zinsmeister \cite[Form.(4)]{AZ}:  
$$
(|\zeta|^{2}-1)^2|\Phi(\mu)(\zeta)|^2 \leq C\int_{\mathbf{D}}\frac{|\mu(z)|^2} 
{|z-\zeta|^4}dxdy \quad(\zeta \in \mathbf{D}^*)
$$
for $\mu \in M_2(\mathbf D)$,
where $C>0$ is a constant depending only on 
$\Vert\mu\Vert_{\infty}$. We can modify this formula
for the estimate of the difference $\Phi(\mu_1)-\Phi(\mu_2)$ (see \cite[Form.(5.4)]{SW}). 
Moreover, the H\"older inequality yields
the corresponding claim for $p \geq 2$.
\end{remark}

\begin{lemma}\label{biLip}
Each Teichm\"uller class $[\mu] \in T_p$ $(p \geq 1)$ contains $\nu \in M_p(\mathbf D)$
such that $f^\nu$ is
a bi-Lip\-schitz diffeomorphism in the hyperbolic metric with 
the bi-Lipschitz constant depending only on $\Vert \mu \Vert_\infty$.
\end{lemma}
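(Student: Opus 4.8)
The plan is to realize the representative as the conformal barycentric (Douady--Earle) extension $E$ of the boundary quasisymmetric homeomorphism $h=f^\mu|_{\mathbf S}$. Since $E|_{\mathbf S}=h$ and $h$ fixes $1,-1,-i$, the map $E$ is exactly the normalized representative $f^\nu$ of $\nu:=\mu_E$, so that $[\nu]=[\mu]$ holds for free. Two independent assertions then remain: that $E$ is a bi-Lipschitz diffeomorphism of $\mathbf D$ in the hyperbolic metric with constant controlled by $\Vert\mu\Vert_\infty$, and that $\nu\in M_p(\mathbf D)$.

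For the first assertion I would rely on the standard properties of $E$: it is a real-analytic diffeomorphism, it is conformally natural in the sense that $E(\alpha\circ h\circ\gamma)=\alpha\circ E\circ\gamma$ for all M\"obius automorphisms $\alpha,\gamma$ of $\mathbf D$, and its maximal dilatation is bounded by a function of the quasisymmetry constant of $h$, hence of $K=(1+\Vert\mu\Vert_\infty)/(1-\Vert\mu\Vert_\infty)$. To estimate the hyperbolic operator norm of $DE$ at an arbitrary $w$, I would conjugate by M\"obius maps sending $w$ and $E(w)$ to the origin; conformal naturality reduces the estimate to a uniform two-sided bound for the Euclidean differential at $0$ of the barycentric extension of the resulting normalized boundary map, and this bound depends only on the dilatation. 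This yields the bi-Lipschitz constant with the required dependence on $\Vert\mu\Vert_\infty$.

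The main obstacle is the second assertion, $\nu\in M_p(\mathbf D)$, where the hyperbolic $L^p$-integrability must be extracted from $\mu\in M_p(\mathbf D)$. Using conformal naturality once more, I would bound $|\nu(w)|$ by the modulus of the complex dilatation at the origin of the barycentric extension of a normalized boundary map $\widetilde h=\alpha\circ h\circ\gamma$ with $E(\widetilde h)(0)=0$. Differentiating the barycenter equation at $0$ expresses $\partial E(0)$ and $\bar\partial E(0)$ as boundary integrals of $\widetilde h$ against explicit kernels, so that $|\nu(w)|=|\bar\partial E(0)/\partial E(0)|$ is governed by a scale-invariant measurement of the deviation of $h$ from a M\"obius transformation near the boundary. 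I would then integrate this pointwise bound against $(1-|w|^2)^{-2}\,du\,dv$ and dominate the outcome by a finite quantity attached to $\mu\in M_p(\mathbf D)$, such as $\Vert\Phi(\mu)\Vert_{\mathcal A_p}$ (finite by Lemma \ref{TTholo}); this comparison is the barycentric analogue of the elementary equivalence, valid for the Ahlfors--Weill extension, between the hyperbolic $L^p$-norm of the Beltrami coefficient and the $\mathcal A_p$-norm of the Schwarzian derivative. The crux is to carry out this conversion uniformly for every $p\geq 1$, inserting the H\"older inequality exactly as in the proof of Lemma \ref{TTholo}, so that the barycentric Beltrami coefficient is genuinely dominated by the integrable data coming from $\mu$.
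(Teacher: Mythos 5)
There is a genuine gap, and it sits exactly at the point you yourself flag as ``the crux.'' Your first assertion (that the Douady--Earle extension is a bi-Lipschitz diffeomorphism in the hyperbolic metric with constant depending only on $\Vert\mu\Vert_\infty$) is standard and correct. But the second assertion --- that its complex dilatation $\nu=\mu_E$ lies in $M_p(\mathbf D)$ --- is precisely the step that is not known to work for $1\leq p<2$, and your sketch gives no mechanism that would make it work. All known pointwise estimates for the barycentric Beltrami coefficient (Cui's for $p=2$, and their extensions in Tang--Shen and Yanagishita for $p\geq 2$) bound $|\nu(w)|$ by an \emph{integral average} --- an $L^2$-type deviation of the renormalized boundary map from a M\"obius map, or an average of $|\Phi(\mu)|^2$ against a kernel, as in the Astala--Zinsmeister formula. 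Converting such an averaged bound into a hyperbolic $L^p$ bound requires Jensen/H\"older in the form $(\text{quadratic average})^{p/2}\lesssim(\text{$p$-average})$, which holds for $p\geq 2$ and goes in the \emph{wrong direction} for $p<2$. Saying one should ``insert the H\"older inequality exactly as in the proof of Lemma \ref{TTholo}'' does not rescue this: that lemma is proved for $p\geq 1$ by estimating the derivative of $\Phi$ following Takhtajan--Teo, not by applying H\"older to a quadratic pointwise bound (indeed the paper's remark after Lemma \ref{TTholo} explains that the quadratic Astala--Zinsmeister route only yields $p\geq 2$). The remark immediately following Lemma \ref{biLip} in the paper makes the same point about your chosen tool: the barycentric extension is what prior work used \emph{for $p\geq 2$}, and the lemma's purpose is to cover $1\leq p<2$, where that route is unavailable.

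The paper's own proof avoids averaging altogether. For $\Vert\mu\Vert_\infty$ small it uses the Ahlfors--Weill section $\nu=\sigma(\Phi(\mu))$, whose Beltrami coefficient satisfies the \emph{pointwise} identity $|\sigma(\varphi)(z^*)|=\tfrac12(1-|z|^2)^2|z|^4|\varphi(z)|$; membership in $M_p(\mathbf D)$ then follows from $\varphi\in\mathcal A_p(\mathbf D^*)$ (Lemma \ref{TTholo}) for \emph{every} $p\geq 1$, with no inequality between averages needed, and bi-Lipschitz regularity comes from \cite[Chap.1, Lem.2.5]{TT}. A general $\mu$ is then handled by factoring it into finitely many pieces $\mu_k=k\mu/n$ of pairwise small relative dilatation and inductively composing Ahlfors--Weill representatives, with Lemma \ref{group} guaranteeing that $p$-integrability and the class $[\mu_k]$ are preserved at each step; the number of steps and hence the bi-Lipschitz constant depend only on $\Vert\mu\Vert_\infty$. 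If you want to salvage your approach, you would have to prove a new pointwise (non-averaged) estimate for the Douady--Earle Beltrami coefficient, which is an open problem rather than a routine adaptation.
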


\begin{proof}
If $\Vert \mu \Vert_\infty<1/3$, then the Ahlfors--Weill section 
$\nu=\sigma(\varphi) \in M(\mathbf D)$ defined by
\begin{equation}\label{AW}
\sigma(\varphi)(z^*)=-\frac{1}{2}(zz^*)^2(1-|z|^2)^2\varphi(z)\ \ (z^*=1/\bar z)
\end{equation}
for $\varphi=\Phi(\mu)$ satisfying
$\Vert \varphi \Vert_{\mathcal A_\infty}<2$ is Teichm\"uller equivalent to $\mu$. 
See \cite[Th.II.5.1]{Le}. By Lemma \ref{TTholo}, $\varphi \in {\mathcal A}_p(\mathbf D^*)$.
Then, we see that $\nu$ belongs to $M_p(\mathbf D)$ by formula \eqref{AW}.
Moreover, as in \cite[Chap.1, Lem.2.5]{TT}, $f^\nu$ is 
a bi-Lipschitz self-diffeomorphism of $\mathbf D$ if $\Vert \mu \Vert_\infty<\delta$ for some $\delta \leq 1/3$.

For an arbitrary $\mu \in M_p(\mathbf D)$, 
let $\mu_k=k\mu/n \in M_p(\mathbf D)$ $(k=1,\ldots,n)$, where
$n \in \mathbf N$ is chosen so that $\Vert \mu_{k+1} \ast \mu_k^{-1} \Vert_\infty <\delta$. Suppose we obtain $\nu_k \in M_p(\mathbf D)$ with
$[\nu_k]=[\mu_k]$ and $f^{\nu_k}$ is bi-Lipschitz. Lemma \ref{group} implies $\mu_{k+1} \ast \nu_k^{-1} \in M_p(\mathbf D)$. Hence, we have 
$$
\sigma(\Phi(\mu_{k+1} \ast \nu_k^{-1}))=\sigma(\Phi(\mu_{k+1} \ast \mu_k^{-1})) \in M_p(\mathbf D)
$$ 
as above, which gives a bi-Lipschitz self-diffeo\-mor\-phism of $\mathbf D$. Its composition with $f^{\nu_k}$ is also
bi-Lipschitz, and let $\nu_{k+1}$ be the complex dilatation of this composition. We have $\nu_{k+1} \in M_p(\mathbf D)$ by Lemma \ref{group}, and 
$[\nu_{k+1}]=[\mu_{k+1}]$. By induction, $\nu=\nu_n$ satisfies $[\nu]=[\mu]$ and $f^{\nu}$ is bi-Lipschitz.
\end{proof}

\begin{remark}
For $p \geq 2$, to obtain the bi-Lipschitz diffeomorphism as in the above lemma,
the barycentric extension due to Douady and Earle (see \cite[Th.2]{DE}) was used in 
\cite{Cu, TS, Ya}, and others.
\end{remark}

\section{The complex structure of integrable Teichm\"uller spaces}

We endow $T_p$ for $p \geq 1$ with the complex Banach manifold structure.
The corresponding result to Proposition \ref{Tcomplex} will be proved.
For $p \geq 2$, this was proved in \cite[Th.2.1]{TS} as a consequence of the fact that
$\Phi:M_p(\mathbf D) \to \mathcal A_p(\mathbf D^*)$ is continuous 
by Tang \cite[Th.3.1]{Tang}.
This also implies that the Teichm\"uller topology and
the quotient topology induced by $\pi$ on $T_p$ are the same for $p \geq 2$. 
%However, our result does not assert the continuity of $\Phi$ or $\pi$.
We can extend this to $p \geq 1$ as the following theorem shows.

\begin{theorem}\label{Pcomplex} 
For $p \geq 1$,
the Bers embedding $\beta:T_p \to \mathcal A_p(\mathbf D^*)$ is a homeomorphism onto 
an open subset $\Phi(M_p(\mathbf D))$. Thus, $T_p$ is endowed with
the complex structure modeled on $\mathcal A_p(\mathbf D^*)$.
The Teich\-m\"uller projection $\pi:M_p(\mathbf D) \to T_p$ is holomorphic and has
a local holomorphic right inverse at every point of $T_p$.
\end{theorem}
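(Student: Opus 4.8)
The plan is to reprove Proposition \ref{Tcomplex} verbatim, with $\mathcal A_\infty(\mathbf D^*)$ and $M(\mathbf D)$ replaced by $\mathcal A_p(\mathbf D^*)$ and $M_p(\mathbf D)$, by supplying in the finer category the two facts that drove that proof: that $\Phi$ is continuous, and that $\Phi$ admits at every point of its image a local holomorphic right inverse valued in $M_p(\mathbf D)$. Continuity of $\Phi\colon M_p(\mathbf D)\to\mathcal A_p(\mathbf D^*)$ is Lemma \ref{TTholo}. To deduce continuity of $\beta$, I would first check that Teichm\"uller convergence $[\mu_k]\to[\mu]$ in $T_p$ lets one select representatives converging in $M_p(\mathbf D)$: fixing a bi-Lipschitz representative $\nu\in[\mu]$ (Lemma \ref{biLip}) and taking near-optimal representatives $a_k\in[\mu_k]$, $b_k\in[\mu]$ with $c_k=a_k\ast b_k^{-1}\to 0$ in $M_p(\mathbf D)$, the coefficient $\mu_k'$ of $f^{c_k}\circ f^{\nu}$ lies in $[\mu_k]$ and satisfies $\mu_k'\ast\nu^{-1}=c_k\to 0$, so $\mu_k'\to\nu$ in $M_p(\mathbf D)$ by the last assertion of Lemma \ref{group}. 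Then $\beta([\mu_k])=\Phi(\mu_k')\to\Phi(\nu)=\beta([\mu])$, giving continuity of $\beta$.

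The substance is the local holomorphic right inverse. At the base point it is explicit: the Ahlfors--Weill section $\sigma$ of \eqref{AW} is linear, hence holomorphic, and by the computation in the proof of Lemma \ref{biLip} together with $\|\varphi\|_{\mathcal A_\infty}\le c_p\|\varphi\|_{\mathcal A_p}$ it maps the $\mathcal A_p$-open set $\{\varphi:\|\varphi\|_{\mathcal A_\infty}<2\}$ into $M_p(\mathbf D)$ with $\Phi\circ\sigma=\mathrm{id}$. For a general $\varphi_0=\Phi(\mu_0)$ I would choose a bi-Lipschitz representative $\nu\in[\mu_0]$ (Lemma \ref{biLip}) and transport $\sigma$ by the right translation $r_\nu$, a biholomorphic automorphism of $M_p(\mathbf D)$ by Lemma \ref{group} and the remark after it. Setting $G:=\Phi\circ r_\nu^{-1}\circ\sigma$, this is holomorphic from an $\mathcal A_p$-neighborhood of $0$ into $\mathcal A_p(\mathbf D^*)$ with $G(0)=\Phi(r_\nu^{-1}(0))=\Phi(\nu)=\varphi_0$, and on $\Phi(M_p(\mathbf D))$ it equals $\hat r_\nu^{-1}$, where $\hat r_\nu(\Phi(\mu))=\Phi(\mu\ast\nu^{-1})$ is the Bers-coordinate base-point change (here I use $\Phi\circ r_\nu=\hat r_\nu\circ\Phi$). \emph{Provided} $G$ is a local biholomorphism of $\mathcal A_p$-neighborhoods, the map $\tau:=r_\nu^{-1}\circ\sigma\circ G^{-1}$ is then the desired holomorphic right inverse of $\Phi$ near $\varphi_0$ into $M_p(\mathbf D)$, since $\Phi\circ\tau=G\circ G^{-1}=\mathrm{id}$ and $\tau(\varphi_0)=r_\nu^{-1}(\sigma(0))=\nu$.

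The main obstacle is precisely showing that $G$ (equivalently the base-point change $\hat r_\nu$) is a local biholomorphism in the $\mathcal A_p$-topology, and not merely in the coarser $\mathcal A_\infty$-topology. By the Banach-space inverse function theorem it suffices to prove that $DG(0)$ is a bounded linear isomorphism of $\mathcal A_p(\mathbf D^*)$. Injectivity is inherited from the universal theory: $G$ restricts the $\mathcal A_\infty$-base-point change $\hat r_\nu^{-1}$, which by Proposition \ref{Tcomplex} is a biholomorphic automorphism of the open set $\Phi(M(\mathbf D))\subset\mathcal A_\infty(\mathbf D^*)$, so its derivative is an isomorphism of $\mathcal A_\infty(\mathbf D^*)$ and in particular injective on the subspace $\mathcal A_p(\mathbf D^*)$. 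The hard point is surjectivity with bounded inverse, i.e. that the inverse of $DG(0)$ (the $\mathcal A_\infty$-isomorphism $D\hat r_\nu(\varphi_0)$) preserves $\mathcal A_p(\mathbf D^*)$ boundedly. I expect to obtain this by differentiating the composition formula \eqref{chain} to get an explicit integral expression for the derivative of the base-point change, and then estimating it by a change of variables driven by the bi-Lipschitz property of $f^{\nu}$, in exact analogy with the two-sided bound $C^{-1}\|\mu-\nu\|_p\le\|\mu\ast\nu^{-1}\|_p\le C\|\mu-\nu\|_p$ of Lemma \ref{group}; the uniform dependence of the bi-Lipschitz constant on $\|\mu_0\|_\infty$ (Lemma \ref{biLip}) is what makes the estimate effective. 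This transfer of the biholomorphy of the base-point change from the $\sup$-norm space to the integrable space is the crux of the whole argument.

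Once $\tau$ is available at each $\varphi_0\in\Phi(M_p(\mathbf D))$, the conclusion assembles exactly as in Proposition \ref{Tcomplex}: the identity $\Phi\circ\tau=\mathrm{id}$ on a neighborhood shows that $\Phi(M_p(\mathbf D))$ is open in $\mathcal A_p(\mathbf D^*)$; $\beta^{-1}$ coincides locally with $\pi\circ\tau$ and is therefore continuous, so $\beta$ is a homeomorphism onto this open set; the charts provided by $\beta$ (with transition maps of the form $\Phi\circ r_{\nu'}\circ r_\nu^{-1}\circ\sigma$, automatically holomorphic as compositions of holomorphic maps) endow $T_p$ with the complex Banach manifold structure modeled on $\mathcal A_p(\mathbf D^*)$; and in these charts $\pi$ is holomorphic with local holomorphic right inverse $s=\tau\circ\beta$ at every point of $T_p$.
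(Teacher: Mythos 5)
Your reduction of the theorem to two facts (continuity of $\Phi$ on $M_p(\mathbf D)$, and a local holomorphic $M_p$-valued right inverse of $\Phi$ at every point of the image) is the right frame, and your continuity argument for $\beta$ via Lemmas \ref{group}, \ref{TTholo} and \ref{biLip} is essentially the paper's. But there is a genuine gap exactly at what you yourself call the crux: you never prove that $G=\Phi\circ r_\nu^{-1}\circ\sigma$ (equivalently the base-point change $\hat r_\nu$ in Bers coordinates) is a local biholomorphism in the $\mathcal A_p$-topology, and the strategy you sketch for it cannot work as stated. Differentiating the composition formula \eqref{chain} only yields the derivative of the right translation $r_\nu$ acting on Beltrami coefficients; that $r_\nu$ is bi-Lipschitz and biholomorphic on $M_p(\mathbf D)$ is already Lemma \ref{group} together with the remark following it, and is not the issue. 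What you need is that the inverse of $DG(0)$, an operator on the Schwarzian side, preserves $\mathcal A_p(\mathbf D^*)$ boundedly; and to pass from the Beltrami side to the Schwarzian side one must invert $D\Phi$ at the general point $\nu$, i.e.\ one needs a local section of $\Phi$ at $\varphi_0$ carrying $\mathcal A_p$-perturbations to $M_p$-perturbations. That is precisely the object under construction, so the argument is circular: with only the $\mathcal A_\infty$-theory available (Proposition \ref{Tcomplex}), any factorization $D\hat r_\nu(\varphi_0)=D\Phi\circ Dr_\nu\circ Ds^\infty_{\varphi_0}$ uses an $\mathcal A_\infty$-section $s^\infty_{\varphi_0}$ whose derivative has no reason to map $\mathcal A_p$ into $p$-integrable Beltrami differentials, and $D\Phi$ applied to a merely bounded differential need not land in $\mathcal A_p$.

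The paper avoids this trap by constructing the section at a general point directly, rather than transporting the Ahlfors--Weill section from the origin by translation plus the inverse function theorem: in Lemma \ref{localsection}, taking the bi-Lipschitz representative $\nu$, it uses the Earle--Nag quasiconformal reflection $j$ over the quasicircle $f_\nu(\mathbf S)$ to produce, for each small $\varphi\in\mathcal A_p(\mathbf D^*)$, a coefficient $\nu_\varphi$ with $\Phi(\nu_\varphi)=\psi+\varphi$, together with the pointwise estimate $|\nu_\varphi(z)-\nu_{\varphi'}(z)|\le C\,|\varphi(z^*)-\varphi'(z^*)|\,\rho_{\mathbf D^*}^{-2}(z^*)$. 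This single pointwise bound simultaneously gives $M_p$-valuedness and $\mathcal A_p\to M_p$ continuity for all $p\ge 1$, and holomorphy then follows from the soft weak-holomorphy principle of Lemma \ref{holomorphy}. Note also that the biholomorphy of the base-point change on $\beta(T_p)$ --- the fact your proof presupposes --- is obtained in the paper only as a corollary of the theorem (the discussion of $R_{[\nu]}$ at the end of Section 4), which confirms it cannot serve as an ingredient. To repair your proof you would have to supply the content of Lemma \ref{localsection}, at which point the detour through the inverse function theorem becomes unnecessary.
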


\begin{proof}
The continuity of $\beta$ follows from Lemmas \ref{group}, \ref{TTholo} and \ref{biLip}.
The fact that 
$\Phi(M_p(\mathbf D))$ is open in $\mathcal A_p(\mathbf D^*)$
is included in the claim that there is a local continuous right inverse $s_\psi$ of $\Phi$ at every point
$\psi \in \Phi(M_p(\mathbf D))$, which is proved 
in Lemma \ref{localsection} below.
Then by Lemma \ref{holomorphy}, $s_\psi$ is in fact holomorphic. 
By Lemma \ref{group}, $\pi:M_p(\mathbf D) \to T_p$ is continuous at $\nu=s_\psi(\psi)$ because 
we choose $f^\nu$ to be bi-Lipschitz.
Since $\beta^{-1}(\psi)=\pi \circ s_\psi(\psi)$,
we see that $\beta^{-1}$ is continuous at $\psi$. The homeomorphism $\beta$ transfers
the claim for $\Phi$ to 
that for $\pi$. 
\end{proof}

\begin{remark}
In the argument of \cite{TT} 
showing the above theorem for $T_2$,
the continuity of $\beta$ is similarly obtained as above,
%a consequence of Lemma \ref{TTholo}, 
but the continuity of $\beta^{-1}$ requires $p=2$ in the second part of the proof of \cite[Chap.1, Th.2.3]{TT}.
\end{remark}

\begin{lemma}\label{localsection}
For every $\psi \in \Phi(M_p(\mathbf D))$ and for every $\nu \in M_p(\mathbf D)$
with $\Phi(\nu)=\psi$ such that $f^\nu$ is bi-Lipschitz,
there is a continuous map $s_\psi:V_\psi \to M_p(\mathbf D)$
on some neighborhood $V_\psi \! \subset \!\mathcal A_p(\mathbf D^*)$
of $\psi$ such that $\Phi \circ s_\psi$ is the identity on $V_\psi$ and $s_\psi(\psi)=\nu$.
\end{lemma}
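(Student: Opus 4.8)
The goal is to construct a local continuous section of $\Phi$ at an arbitrary point $\psi \in \Phi(M_p(\mathbf D))$, landing in $M_p(\mathbf D)$ and passing through a prescribed bi-Lipschitz representative $\nu$. The natural strategy is to reduce the general case to a neighborhood of the origin by using the biholomorphic automorphism $r_\nu$ of $M_p(\mathbf D)$ from Lemma \ref{group}, and to combine the known right inverse at the level of the full space $T$ (Proposition \ref{submersion}, item (b)) with the Ahlfors--Weill section, which produces bi-Lipschitz maps for small Schwarzians by Lemma \ref{biLip}. Let me sketch the order of steps.

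First I would treat the base case $\psi = 0$, i.e. $\nu = 0$. For $\varphi \in \mathcal A_p(\mathbf D^*)$ with $\Vert \varphi \Vert_{\mathcal A_p}$ small, the bound $\Vert \varphi \Vert_{\mathcal A_\infty} \leq c_p \Vert \varphi \Vert_{\mathcal A_p}$ guarantees $\Vert \varphi \Vert_{\mathcal A_\infty} < 2$, so the Ahlfors--Weill section $\sigma(\varphi)$ of \eqref{AW} is defined; by the computation already used in the proof of Lemma \ref{biLip}, the explicit formula \eqref{AW} shows $\sigma(\varphi) \in M_p(\mathbf D)$ with $\Vert \sigma(\varphi) \Vert_p$ controlled by $\Vert \varphi \Vert_{\mathcal A_p}$, and $f^{\sigma(\varphi)}$ is bi-Lipschitz once $\Vert \varphi \Vert_{\mathcal A_\infty} < \delta$. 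Since $\sigma$ is a right inverse of $\Phi$ (the Ahlfors--Weill map recovers the Schwarzian), this gives a continuous $s_0$ on a ball $V_0$ about $0$ with $\Phi \circ s_0 = \mathrm{id}$ and $s_0(0) = 0$; continuity in the $\Vert \cdot \Vert_p + \Vert \cdot \Vert_\infty$ norm follows from the linearity of \eqref{AW} in $\varphi$.

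Next I would transport this to a general $\psi = \Phi(\nu)$. The key identity is that $\Phi$ intertwines the right translations: there is a translation $L_\nu$ on $\mathcal A_p(\mathbf D^*)$, induced by pre-composition with $f^\nu|_{\mathbf D^*}$ on the Schwarzian side, such that $\Phi \circ r_{\nu^{-1}} = L_\nu \circ \Phi$, with $L_\nu(\psi) = 0$ and $L_\nu$ a biholomorphic self-map of $\mathcal A_p(\mathbf D^*)$ in a neighborhood of $\psi$ (this is the $M_p$-analogue of the standard change-of-base-point biholomorphism for the universal Teichm\"uller space, and $r_{\nu^{-1}}$ is the automorphism of $M_p(\mathbf D)$ from the discussion after Lemma \ref{group}). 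Setting $V_\psi = L_\nu^{-1}(V_0)$ and
$$
s_\psi = r_\nu \circ s_0 \circ L_\nu,
$$
I would check directly that $\Phi \circ s_\psi = \mathrm{id}$ on $V_\psi$ and $s_\psi(\psi) = r_\nu(s_0(0)) = r_\nu(0) = \nu$, using $r_\nu = (r_{\nu^{-1}})^{-1}$ and $r_\nu(0) = 0 \ast \nu^{-1} = \nu^{-1}$... so I must be careful with the direction, and instead read off $\nu$ as the image of $0$ under the appropriate translation so that the prescribed representative is hit. Continuity of $s_\psi$ is then immediate as a composition of the continuous maps $r_\nu$, $s_0$, and $L_\nu$.

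The main obstacle I anticipate is establishing that $L_\nu$ is a well-defined biholomorphism of a neighborhood in $\mathcal A_p(\mathbf D^*)$, i.e. that the change-of-base-point map respects the $\mathcal A_p$-norm and not merely $\mathcal A_\infty$. The bi-Lipschitz hypothesis on $f^\nu$ is exactly what is needed here: it makes the change of variables in the defining integral of $\Vert \cdot \Vert_{\mathcal A_p}$ comparable in both directions, paralleling the estimate in the proof of Lemma \ref{group}. Once $L_\nu$ is controlled in the $\mathcal A_p$-norm, everything else is a formal composition, and the openness of $\Phi(M_p(\mathbf D))$ asserted in Theorem \ref{Pcomplex} drops out because each $s_\psi$ maps an $\mathcal A_p$-neighborhood $V_\psi$ of $\psi$ into $\Phi(M_p(\mathbf D))^{-1}$, exhibiting $V_\psi \subset \Phi(M_p(\mathbf D))$.
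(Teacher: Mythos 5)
Your base step at $\psi=0$ is fine: the Ahlfors--Weill section \eqref{AW} does give a continuous local right inverse $s_0$ near the origin with values in $M_p(\mathbf D)$, and this matches the first paragraph of the paper's Lemma \ref{biLip}. The genuine gap is in the reduction of a general base point to the origin, i.e.\ exactly at the step you yourself flag as the main obstacle: the translation $L_\nu$ does not exist in the form your argument needs. First, it is not ``induced by pre-composition'': $f^\nu$ is a self-map of $\mathbf D$, not of $\mathbf D^*$, and the map that actually conjugates Schwarzians by the Bers conformal map $f_\nu|_{\mathbf D^*}$, namely $\varphi \mapsto \left((\varphi-\psi)\circ f_\nu^{-1}\right)\cdot\left((f_\nu^{-1})'\right)^2$, lands in quadratic differentials on the quasidisk $\Omega^*=f_\nu(\mathbf D^*)$, not back in $\mathcal A_p(\mathbf D^*)$. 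There is no composition-type formula within $\mathcal A_p(\mathbf D^*)$ for the change-of-base-point map $\Phi(\mu)\mapsto\Phi(\mu\ast\nu^{-1})$ (the Schwarzian cocycle only tolerates pre-composition by conformal self-maps of $\mathbf D^*$, i.e.\ M\"obius transformations), so the Lemma \ref{group}-style bi-Lipschitz change-of-variables estimate has nothing to act on. Second, if you instead define $L_\nu$ abstractly as $\beta\circ R\circ\beta^{-1}$, where $R$ is the right translation $[\mu]\mapsto[\mu\ast\nu^{-1}]$ of $T$ (legitimate on an $\mathcal A_\infty$-neighborhood of $\psi$ by Proposition \ref{Tcomplex}), then to conclude that $L_\nu(\varphi)$ lies in $\mathcal A_p(\mathbf D^*)$ and is $\mathcal A_p$-close to $0$ for $\varphi$ in an $\mathcal A_p$-neighborhood of $\psi$, you would need $\varphi$ to admit a Beltrami representative in $M_p(\mathbf D)$; that is, you would need to know in advance that an $\mathcal A_p$-neighborhood of $\psi$ is contained in $\Phi(M_p(\mathbf D))$. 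But that openness is precisely what Lemma \ref{localsection} is meant to establish (see the proof of Theorem \ref{Pcomplex}), so this route is circular.

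For comparison, the paper avoids any translation and works directly at $\psi$: it uses the quasiconformal reflection $j$ over the quasicircle $f_\nu(\mathbf S)$ together with the Earle--Nag estimates to build a generalized Ahlfors--Weill section, producing for each small $\varphi$ a quasiconformal $g$, conformal on $\Omega^*$, with $S_{g\circ f_\nu|_{\mathbf D^*}}=\psi+\varphi$ and the pointwise bound $|\mu_g(f_\nu(z))|\leq \varepsilon^{-1}|\varphi(z^*)|\,\rho_{\mathbf D^*}^{-2}(z^*)$; the bi-Lipschitz property of $f_\nu$ then pulls $\mu_g$ back into $M_p(\mathbf D)$ via formula \eqref{chain} and yields the continuity estimate for $\varphi\mapsto\nu_\varphi$. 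Note that a correct implementation of your translation idea is forced into the same construction: the affine Schwarzian cocycle moves the model space to $\mathcal A_p(\Omega^*)$, and a section at the origin of that model is exactly the reflection construction over $\partial\Omega^*$. So the fix for your gap is, in substance, the paper's proof.
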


\begin{proof}
By Lemma \ref{biLip}, for any $\psi \in \Phi(M_p(\mathbf D))$,
we can take $\nu \in B_p(\mathbf D)$ such that $\Phi(\nu)=\psi$ and
$f_{\nu}|_{\mathbf D}$ is a bi-Lipschitz diffeo\-morphism. 
The quasiconformal reflection
$j:f_{\nu}(\mathbf D) \to f_{\nu}(\mathbf D^*)$ over 
the quasicircle $f_{\nu}(\mathbf S)$ is defined by
$j(\zeta)=f_{\nu}(f_{\nu}^{-1}(\zeta)^*)$ for $\zeta \in f_{\nu}(\mathbf D)$. 
Then, there is a constant $c_1>0$ such that
$$
|j_{\bar z}(\zeta)| \rho_{\Omega^*}(j(\zeta)) \leq c_1 \rho_{\Omega}(\zeta), 
$$
where $\rho_\Omega$ and $\rho_{\Omega^*}$ denote the hyperbolic densities of $\Omega=f_{\nu}(\mathbf D)$ and
$\Omega^*=f_{\nu}(\mathbf D^*)$. Moreover,
by \cite[Lem.3]{EN},
there is $c_2>0$ such that
$$
|\zeta-j(\zeta)|^2 \rho_\Omega(\zeta) \rho_{\Omega^*}(j(\zeta)) \leq c_2.
$$
The constants $c_1$ and $c_2$ depend only on $\Vert \nu \Vert_{\infty}$. Hence, for $c=c_1c_2$, we have
\begin{equation}\label{reflection}
|\zeta-j(\zeta)|^2 |j_{\bar z}(\zeta)| \leq c \rho_{\Omega^*}^{-2}(j(\zeta)).
\end{equation}

As in \cite[Sect.6]{EN} and \cite[Sect.II.4.2]{Le}, 
there is a constant $\varepsilon \in (0,1/c)$ depending only on $\Vert \nu \Vert_{\infty}$
such that if $\varphi \in \mathcal A_p(\mathbf D^*)$ satisfies $\Vert \varphi \Vert_{\infty} <\varepsilon$, then
there is a quasiconformal self-homeomorphism $g$ of $\widehat{\mathbf C}$ conformal on 
$\Omega^*$ such that
$S_{g \circ f_{\nu}|_{\mathbf D^*}}=\psi+\varphi$.
In this manner, $g$ is given so that
its complex dilatation is 
\begin{align*}
\mu_{g}(\zeta)
&=\frac{S_{g}(j(\zeta))(\zeta-j(\zeta))^2 j_{\bar z}(\zeta)}
{2+S_{g}(j(\zeta))(\zeta-j(\zeta))^2 j_z(\zeta)} \quad (\zeta \in \Omega).
\end{align*}
By setting $\zeta =f_{\nu}(z)$, we obtain from \eqref{reflection} that
\begin{align*}
|S_{g}(j(\zeta))(\zeta-j(\zeta))^2 j_{\bar z}(\zeta)|
\leq \frac{1}{\varepsilon}|\varphi(z^*)| \rho_{\mathbf D^*}^{-2}(z^*)<1.
\end{align*}
Hence,
$|\mu_{g}(f_{\nu}(z))| \leq \frac{1}{\varepsilon}\,|\varphi(z^*)| \rho_{\mathbf D^*}^{-2}(z^*)$
for every $z \in \mathbf D$, and thus $\mu_{g} \circ f_{\nu} \in M_p(\mathbf D)$.

We denote the complex dilatation of $g \circ f_{\nu}$ by $\nu_{\varphi}$.  
Formula \eqref{chain} yields
$$
|\nu_{\varphi}(z)-\nu_{\varphi'}(z)| \leq \frac{|\mu_{g} \circ f_{\nu}(z)-\mu_{g'} \circ f_{\nu}(z)|}{\sqrt{(1-\Vert\mu_{g} \Vert_\infty^2)(1-\Vert \mu_{g'}\Vert_\infty^2)}}
$$
for any $\varphi$ and $\varphi'$ in $\mathcal A_p(\mathbf D^*)$ with $\Vert \varphi \Vert_{\mathcal A_p},
\Vert \varphi' \Vert_{\mathcal A_p} <\varepsilon/c_p$,
where $g$ and $g'$ are the corresponding
quasiconformal homeomorphisms. 
As both $\mu_{g} \circ f_{\nu}$ and $\nu=\nu_0$ belong to $M_p(\mathbf D)$, so does $\nu_{\varphi}$.

Because
$\Phi(\nu_\varphi)=S_{g \circ f_{\nu}|_{\mathbf D^*}}=\psi+\varphi$,  
we have a local right inverse $s_\psi$ of $\Phi$ on the neighborhood
$$
V_\psi=\{\psi+\varphi \mid \Vert \varphi \Vert_{\mathcal A_p}<\varepsilon/c_p\} 
\subset \mathcal A_p(\mathbf D^*)
$$
by the correspondence $s_\psi:\psi+\varphi \mapsto \nu_\varphi$. 
By the above 
inequalities for $\mu_{g} \circ f_{\nu}$ and $\nu_{\varphi}$, we see that 
there is a constant $C>0$ such that
$$
|\nu_{\varphi}(z)-\nu_{\varphi'}(z)| \leq C |\varphi(z^*)-\varphi'(z^*)| \rho_{\mathbf D^*}^{-2}(z^*) 
$$
for $z \in \mathbf D$. This implies that $s_\psi$ is continuous.
\end{proof}

\begin{remark}
A similar proof is in \cite[Lem.7.5]{Mat2} for a different kind of Teichm\"uller space.
The necessity of the bi-Lipschitz quasiconformal extension is also pointed out in \cite[p.68]{Sh2}.
\end{remark}

\begin{lemma}\label{holomorphy}
Let $\Psi$ be a holomorphic map on an open subset $V$ of $\mathcal A_\infty(\mathbf D^*)$ into $M(\mathbf D)$.
If $\Psi$ is continuous on 
the open subset $V \cap {\mathcal A}_p(\mathbf D^*)$ 
into $M_p(\mathbf D)$ in the stronger topologies,
then it is holomorphic.
\end{lemma}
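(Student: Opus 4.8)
The plan is to reduce the claim to the classical criterion that a map between open subsets of complex Banach spaces is holomorphic precisely when it is locally bounded and G\^ateaux holomorphic. Write $E_p$ for the Banach space of measurable functions on $\mathbf D$ with norm $\Vert \cdot \Vert_\infty + \Vert \cdot \Vert_p$, so that $M_p(\mathbf D)$ is an open subset of $E_p$ and the inclusion $E_p \hookrightarrow L^\infty(\mathbf D)$ is continuous; likewise $\mathcal A_p(\mathbf D^*) \hookrightarrow \mathcal A_\infty(\mathbf D^*)$ is continuous by the inequality $\Vert \cdot \Vert_{\mathcal A_\infty} \leq c_p \Vert \cdot \Vert_{\mathcal A_p}$. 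The assumed continuity of $\Psi$ into $M_p(\mathbf D)$ already gives local boundedness into $E_p$, so the entire task is to verify G\^ateaux holomorphy of $\Psi$ in the stronger topology.

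First I would fix $\psi_0 \in V \cap \mathcal A_p(\mathbf D^*)$ and a direction $\chi \in \mathcal A_p(\mathbf D^*)$ and study the one-variable map $\phi(\lambda) = \Psi(\psi_0 + \lambda \chi)$ on a small disk $\{|\lambda| < r\}$. Since the segment lies in $V$, the hypothesis that $\Psi$ is holomorphic into $M(\mathbf D)$ shows that $\phi$ is holomorphic into $L^\infty(\mathbf D)$, while the assumed continuity in the stronger topologies shows that $\phi$ is continuous into $E_p$; in particular $\phi$ is bounded in the $E_p$-norm on each circle $\{|\zeta| = \rho\}$ with $\rho < r$.

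The key step is to upgrade the holomorphy of $\phi$ from the target $L^\infty(\mathbf D)$ to the target $E_p$, which I would carry out through the Cauchy integral formula. For $|\lambda| < \rho$ the $L^\infty$-holomorphy of $\phi$ gives the representation
$$
\phi(\lambda) = \frac{1}{2\pi i} \int_{|\zeta| = \rho} \frac{\phi(\zeta)}{\zeta - \lambda}\, d\zeta
$$
as an identity in $L^\infty(\mathbf D)$. But on the compact circle $\phi$ is a continuous, hence bounded, $E_p$-valued function, so the right-hand side also exists as an $E_p$-valued Bochner integral; because the continuous inclusion $E_p \hookrightarrow L^\infty(\mathbf D)$ commutes with Bochner integration, the identity persists in $E_p$. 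Differentiating under the integral sign with respect to $\lambda$ then exhibits $\phi'(\lambda)$ as a convergent $E_p$-valued integral, so $\phi$ is holomorphic into $E_p$. This is exactly the G\^ateaux holomorphy of $\Psi$ in the stronger topology, and combined with local boundedness the classical criterion yields that $\Psi : V \cap \mathcal A_p(\mathbf D^*) \to E_p$ is holomorphic; since its values lie in the open subset $M_p(\mathbf D)$, it is holomorphic into $M_p(\mathbf D)$.

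The main obstacle is this transfer of holomorphy across the two topologies: one must check that the Cauchy integral, which a priori is only an identity in $L^\infty(\mathbf D)$, genuinely converges in the finer $E_p$-norm and represents $\phi$ there. This is precisely where the hypothesis of continuity into $M_p(\mathbf D)$ is indispensable, as it supplies the boundedness of the integrand in the $E_p$-norm on circles that makes the Bochner integral well defined.
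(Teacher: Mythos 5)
Your proof is correct, but it takes a different route from the one the paper actually uses for this lemma. The paper's proof invokes the Bourbaki criterion in its \emph{weak holomorphy} form: it introduces the separating family of linear functionals $\alpha_E(\mu)=\int_E \mu(z)\,dxdy$ indexed by measurable sets $E \subset \mathbf D$, which are continuous both on $L^\infty(\mathbf D)$ (since $\mathbf D$ has finite area) and on the Banach space with norm $\Vert\cdot\Vert_\infty+\Vert\cdot\Vert_p$. Because each $\alpha_E$ lives on both spaces, the holomorphy of the scalar functions $\alpha_E(\Psi(\varphi))$ is immediate from the hypothesis that $\Psi$ is holomorphic into $M(\mathbf D)$, and the criterion (local boundedness plus weak holomorphy against a point-separating family) finishes the proof with no integration in the target space at all. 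You instead verify \emph{G\^ateaux holomorphy} directly, by writing the Cauchy integral representation of $\lambda\mapsto\Psi(\psi_0+\lambda\chi)$ in $L^\infty(\mathbf D)$, observing that continuity into $M_p(\mathbf D)$ makes the same integral convergent as a Bochner integral in the stronger norm, and using injectivity of the inclusion to transfer the identity; this is essentially the alternative the authors themselves flag in Remark 4.5 (citing the same page of Bourbaki and [WM-6, Lem.~6.1]). The trade-off: the paper's functional-family argument is shorter and avoids Bochner integration entirely, while your bootstrap is more self-contained in that it does not require exhibiting a separating subset of the dual, and it makes explicit where the continuity hypothesis enters (boundedness of the integrand on circles in the $\Vert\cdot\Vert_\infty+\Vert\cdot\Vert_p$ norm). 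Both arguments are sound and rest on the same underlying infinite-dimensional holomorphy principle.
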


\begin{proof}
We apply a claim on infinite-dimensional holomorphy in \cite[p.28]{Bour} (see also \cite[Lem.V.5.1]{Le}).
For $\mu \in M_p(\mathbf D)$, we consider $\alpha_E(\mu)=\int_E \mu(z)dxdy$ for each measurable subset 
$E \subset \mathbf D$ as an element of the dual space of the Banach space of 
all measurable functions $\mu$ on $\mathbf D$ with norm $\Vert \mu \Vert_\infty+\Vert \mu \Vert_p$ finite.
If $\alpha_E(\mu)=0$ for every $E \subset \mathbf D$,
then $\mu=0$. Moreover,
$\alpha_E(\Psi(\varphi)) \in \mathbf C$ for each $E$ depends holomorphically on $\varphi \in V$.
Then, by the claim cited above, we see that $\Psi:V \cap {\mathcal A}_p(\mathbf D^*) \to M_p(\mathbf D)$
is holomorphic. 
\end{proof}

\begin{remark}\label{usual}
Similar applications of this argument can be found in \cite[p.591]{FH}, \cite[p.141]{SW}, and \cite[p.964]{Ya}.
Moreover, we can also deduce the holomorphy of $\Psi$ by showing 
that they are locally bounded and G\^ateaux holomorphic. This is also based on \cite[p.28]{Bour} and formulated in 
\cite[Lem.6.1]{WM-6}.
\end{remark}

For every $[\nu] \in T_p$, we define a map $R_{[\nu]}:T_p \to T_p$ by $R_{[\nu]}([\mu])=\pi(r_\nu(\mu))$ for $[\mu] \in T_p$,
where $r_\nu$ is the homeomorphic automorphism of $M_p(\mathbf D)$ given by
a bi-Lipschitz representative $\nu$. 
This is well defined as the right translation in the group structure of $T_p$.
By taking the local right inverse of the projection $\pi$ at each $[\mu] \in T_p$ as in Theorem \ref{Pcomplex}, 
%and by Lemma \ref{TTholo},
we see that 
$R_{[\nu]}$ is continuous, and hence it is a homeomorphic automorphism of $T_p$.
A similar argument as in Lemma \ref{holomorphy} shows that $R_{[\nu]}$ is in fact a biholomorphic automorphism of $T_p$
identified with the open subset
$\beta(T_p)$ in $\mathcal A_p(\mathbf D^*)$ for $p \geq 1$.

\section{Characterization by Besov spaces}
We intrinsically characterize the quasisymmetric extension $f^\mu|_{\mathbf S}$ of
$f^\mu:\mathbf D \to \mathbf D$ 
with $\mu \in M_p(\mathbf D)$ for $p>1$. 
Hereafter, we change the roles of $\mathbf D$ and $\mathbf D^*$ 
for the sake of the definition of function spaces.

For a holomorphic function $\phi$ on $\mathbf D$, 
we define
$$
\Vert \phi \Vert_{\mathcal B_p}=\left(\int_{\mathbf D} |\phi'(z)|^p(1-|z|^2)^{p-2}dxdy\right)^{\frac{1}{p}}.
$$
The set of all holomorphic functions $\phi$ on $\mathbf D$ with $\Vert \phi \Vert_{\mathcal B_p}<\infty$
is denoted by $\mathcal B_p(\mathbf D)$ and called the {\it analytic $p$-Besov space} on $\mathbf D$.
By ignoring the difference of complex constants, $\mathcal B_p(\mathbf D)$ is a complex Banach space
with this norm. For details, see \cite[Sect.5.3]{Zh}.

Let $H(z)=-i(z+1)/(z-1)$ be the Cayley transformation,  
which maps $\mathbf D$ onto $\mathbf U$.
For a conformal map $f$ of $\mathbf D$,
we take the conjugate $\hat f=H \circ f \circ H^{-1}$.
Moreover, for a holomorphic function $\phi$ on $\mathbf D$,
we take the push-forward $\phi_\ast=\phi \circ H^{-1}$.
Then,
$$
\Vert \phi \Vert_{\mathcal B_p}=\left(\int_{\mathbf U} |\phi_*'(\zeta)|^p(2\,{\rm Im}\,\zeta)^{p-2}d\xi d\eta\right)^{\frac{1}{p}}
=:\Vert \phi_* \Vert_{\mathcal B_p},
$$
and the set of all such holomorphic functions $\phi_*$ on $\mathbf U$ with $\Vert \phi_* \Vert_{\mathcal B_p}<\infty$
is defined to be the analytic $p$-Besov space $\mathcal B_p(\mathbf U)$ on $\mathbf U$.
Similarly, for the Schwarzian derivative $\varphi=S_f$,
let $\varphi_*=S_{\hat f}=S_{f \circ H^{-1}}$ on $\mathbf U$ and let
$\Vert \varphi_* \Vert_{\mathcal A_p}:=\Vert \varphi \Vert_{\mathcal A_p}$.
The Banach space $\mathcal A_p(\mathbf U)=\{\varphi_* \mid \varphi \in \mathcal A_p(\mathbf D)\}$ is defined
by this norm.

\begin{proposition}\label{equivalence}
Let $p>1$. 
The following conditions are equivalent for
a conformal homeomorphism $f$ of $\mathbf D$ extending to
a quasiconformal homeomorphism of $\widehat {\mathbf C}$ with
$f(1)=1$ and $f(\infty)=\infty$:
$\rm{(i)}$
$S_f \in \mathcal A_p(\mathbf D)$;
$\rm{(ii)}$
$\log f' \in \mathcal B_p(\mathbf D)$;
$\rm{(iii)}$
$S_{\hat f} \in \mathcal A_p(\mathbf U)$;
$\rm{(iv)}$
$\log \hat f' \in \mathcal B_p(\mathbf U)$.
\end{proposition}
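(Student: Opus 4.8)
The plan is to collapse the four conditions to the single equivalence ``$S_f\in\mathcal A_p$ if and only if $\log f'\in\mathcal B_p$'' and to prove that one statement on $\mathbf D$. The pair (i)$\Leftrightarrow$(iii) is immediate from the definitions: since $H$ is Möbius, $S_{\hat f}=S_{f\circ H^{-1}}=(S_f\circ H^{-1})((H^{-1})')^2=(S_f)_*$, and the norm on $\mathcal A_p(\mathbf U)$ was defined so that $\Vert (S_f)_*\Vert_{\mathcal A_p}=\Vert S_f\Vert_{\mathcal A_p}$. Hence it suffices to prove (i)$\Leftrightarrow$(ii); the remaining equivalence (iii)$\Leftrightarrow$(iv) is the very same statement for the conformal map $\hat f$ on $\mathbf U$, proved by repeating the argument below with $(1-|z|^2)$ replaced by $2\,{\rm Im}\,\zeta$. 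Throughout put $u=\log f'$, so that $u'=f''/f'$ and $S_f=u''-\tfrac12(u')^2$.

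For (ii)$\Rightarrow$(i) I would use two facts valid for any $p\geq 1$. First, since $f$ is univalent, the classical distortion estimate gives the a~priori bound $(1-|z|^2)|u'(z)|\leq 6$ on $\mathbf D$. Second, differentiation raises the Besov weight, $\int_{\mathbf D}|u''|^p(1-|z|^2)^{2p-2}dxdy\leq C\int_{\mathbf D}|u'|^p(1-|z|^2)^{p-2}dxdy$ (see \cite[Sect.5.3]{Zh}). Combining $|S_f|^p\leq 2^{p-1}(|u''|^p+2^{-p}|u'|^{2p})$ with the pointwise estimate $|u'|^{2p}(1-|z|^2)^{2p-2}\leq 6^{p}\,|u'|^{p}(1-|z|^2)^{p-2}$, which is just the Bloch bound, then controls $\Vert S_f\Vert_{\mathcal A_p}$ by $\Vert u\Vert_{\mathcal B_p}$.

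The converse (i)$\Rightarrow$(ii) is the main obstacle, and the naive approach fails here: writing $u''=S_f+\tfrac12(u')^2$ and trying to absorb the quadratic term back into $\int|u'|^p(1-|z|^2)^{p-2}dxdy$ is hopeless, since the only available bound on the coefficient, $(1-|z|^2)|u'|\leq 6$, is of size $O(1)$ rather than small, so the resulting constant never beats $1$. Instead I would route through the Beltrami coefficient. When $\Vert S_f\Vert_{\mathcal A_\infty}$ is small, the Ahlfors--Weill section \eqref{AW} produces $\nu=\sigma(S_f)$ with $f^\nu$ Teichm\"uller equivalent to $f$, and the pointwise form of \eqref{AW} together with the computation in Lemma \ref{biLip} shows $\nu\in M_p(\mathbf D)$ precisely because $S_f\in\mathcal A_p$. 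The pre-Schwarzian $u'=f''/f'$ is then recovered from $\nu$ by a Cauchy or Beurling transform integral of $\nu$; the $L^p$-boundedness of this Calder\'on--Zygmund operator against the hyperbolic weight, which holds exactly for $p>1$, yields $\int_{\mathbf D}|u'|^p(1-|z|^2)^{p-2}dxdy\leq C\Vert\nu\Vert_p^p<\infty$, that is $\log f'\in\mathcal B_p(\mathbf D)$. This is the step that forces the hypothesis $p>1$.

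Finally I would remove the smallness assumption exactly as in the proof of Lemma \ref{biLip}: letting $\mu$ be the Beltrami coefficient of the extension of $f$ and $f_t$ the conformal map associated with $t\mu$, choose $0=t_0<\dots<t_n=1$ so that each $g_k=f_{t_k}\circ f_{t_{k-1}}^{-1}$ has Schwarzian norm below the threshold, and select bi-Lipschitz representatives. The cocycle identity $\mathcal P_{f}=\sum_{k=1}^{n}(\mathcal P_{g_k}\circ f_{t_{k-1}})\,f_{t_{k-1}}'$ for the pre-Schwarzian then expresses $\Vert\log f'\Vert_{\mathcal B_p(\mathbf D)}$ as a finite sum, each term of which is transported to the small-norm case by the bi-Lipschitz change of variables of Lemma \ref{group}. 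I expect the $L^p$-bound for the singular integral recovering $u'$ from $\nu$, and the uniform control of the change-of-variables constants across the quasidisks $f_{t_{k-1}}(\mathbf D)$, to be the two delicate points of the argument.
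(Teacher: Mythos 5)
Your reduction (i)$\Leftrightarrow$(iii) by M\"obius invariance and your direction (ii)$\Rightarrow$(i) are both sound: the decomposition $S_f=u''-\tfrac12(u')^2$, the univalence bound $(1-|z|^2)|u'|\leq 6$, and the inequality bounding the weighted integral of $|u''|$ by that of $|u'|$ (sub-mean-value property plus Fubini) give a correct proof of the easy implication, and your observation that the naive absorption argument cannot give the converse is also accurate. Note, however, that the paper does not prove any of this itself: it obtains (i)$\Leftrightarrow$(ii) by citing Guo [Gu, Th.1], (iii)$\Leftrightarrow$(iv) by citing [STW, Th.4.4], [LiS, Th.1.3] and [WM-4, Th.7.1], and only the M\"obius-invariance step coincides with yours. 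So you are attempting to reprove substantial results from the literature, and that is where the attempt breaks down.

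The genuine gap is the core of (i)$\Rightarrow$(ii). Your claim that ``the pre-Schwarzian $u'=f''/f'$ is recovered from $\nu$ by a Cauchy or Beurling transform integral of $\nu$'' is not correct as stated: only the \emph{first variation} of $\log f_\nu'$ at $\nu=0$ is a linear Calder\'on--Zygmund integral of $\nu$. The exact representation coming from $F(z)=z+C[\nu F_w](z)$ reads, for $z\in\mathbf D$,
\begin{equation*}
f''(z)=-\frac{2}{\pi}\int_{\mathbf D^*}\frac{\nu(w)\,F_w(w)}{(w-z)^3}\,dudv ,
\end{equation*}
which involves the unknown derivative $F_w$ of the extension, and one must additionally divide by $f'(z)$; neither issue is addressed, so the asserted weighted $L^p$ bound ``which holds exactly for $p>1$'' --- the very place where the hypothesis $p>1$ is supposed to enter --- does not follow from boundedness of a singular integral operator. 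Controlling $F_w$ and $1/f'$ (via Koebe distortion and the explicit structure of the Ahlfors--Weill extension, or via a variational argument integrating along the path $t\mu$) is precisely the hard analytic content of [Gu, Th.1] and its relatives, and it is missing here. The patching step has a second, related defect: after writing $\mathcal P_f=\sum_k(\mathcal P_{g_k}\circ f_{t_{k-1}})f_{t_{k-1}}'$, each $g_k$ is conformal on the \emph{quasidisk} $f_{t_{k-1}}(\mathbf D)$, so the change of variables turns the $k$-th summand into a Besov-type norm of $\mathcal P_{g_k}$ weighted by $d(\cdot,\partial\Omega)^{p-2}$ on that quasidisk; you therefore need the small-norm case of (i)$\Rightarrow$(ii) on quasidisks with constants depending only on $\Vert\mu\Vert_\infty$, which is a strictly stronger statement than the one on $\mathbf D$. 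Lemma \ref{group} cannot supply this: it concerns hyperbolically bi-Lipschitz self-maps of $\mathbf D$ acting on Beltrami coefficients, not conformal maps of $\mathbf D$ onto quasidisks acting on pre-Schwarzian norms.
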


\begin{proof}
The equivalence $\rm{(i)} \Leftrightarrow \rm{(ii)}$ is proved in \cite[Th.1]{Gu}.
$\rm{(iii)} \Leftrightarrow \rm{(iv)}$ 
%for $p=2$ 
is proved in \cite[Th.4.4]{STW} and \cite[Th.1.3]{LiS}. See \cite[Th.7.1]{WM-4}.
%The argument in the case of $p>1$ is the same.
$\rm{(i)} \Leftrightarrow \rm{(iii)}$ is due to the M\"obius invariance.
\end{proof}

Any function $\phi \in \mathcal B_p(\mathbf D)$ has a non-tangential limit at almost every point of $\mathbf S$
(see \cite[Lem.10.13]{Zh}). This is also true for $\phi_* \in \mathcal B_p(\mathbf U)$. These define
the boundary functions $\phi|_\mathbf S$ on $\mathbf S$ and ${\phi_*}|_{\mathbf R}$ on
$\mathbf R$.

\begin{definition}
A locally integrable complex-valued function $u$ on $\mathbf S$ belongs to the {\it $p$-Besov space}
$B_p(\mathbf S)$ on $\mathbf S$ for $p>1$ if 
$$
\Vert u \Vert_{B_p}=\left(\int_{\mathbf S}\int_{\mathbf S} 
\frac{|u(x_1)-u(x_2)|^p}{|x_1-x_2|^2} |dx_1||dx_2| \right)^{\frac{1}{p}}<\infty.
$$
The $p$-Besov space
$B_p(\mathbf R)$ on $\mathbf R$ is the space of all functions
$u_*=u \circ H^{-1}$ for $u \in B_p(\mathbf S)$ with the norm
$\Vert u_* \Vert_{B_p}:=\Vert u \Vert_{B_p}$.
We regard them as complex Banach spaces by ignoring
additive constants. 
\end{definition}

The next claim says that the boundary function $\phi|_{\mathbf S}$ of 
$\phi \in \mathcal B_p(\mathbf D)$ belongs to $B_p(\mathbf S)$ for $p>1$.
This statement is in \cite[p.131]{Zh}. An explicit proof can be found in \cite[Th.2.1, 5.1]{Pa}
as mentioned in \cite[p.505]{Rei}.

\begin{lemma}\label{boundary}
There is a constant $C_p \geq 1$ depending only on $p>1$ such that for every $\phi \in \mathcal B_p(\mathbf D)$, 
$$C_p^{-1} \Vert \phi \Vert_{\mathcal B_p}  \leq \Vert \phi|_{\mathbf S} \Vert_{B_p} \leq C_p \Vert \phi \Vert_{\mathcal B_p}.$$
\end{lemma}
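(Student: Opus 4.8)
The plan is to prove the two-sided estimate
$$
C_p^{-1}\Vert \phi \Vert_{\mathcal B_p} \leq \Vert \phi|_{\mathbf S} \Vert_{B_p} \leq C_p \Vert \phi \Vert_{\mathcal B_p}
$$
by relating the \emph{interior} Besov seminorm, an integral of $|\phi'(z)|^p(1-|z|^2)^{p-2}$ over $\mathbf D$, to the \emph{boundary} double integral defining $\Vert \phi|_{\mathbf S}\Vert_{B_p}$. Since $\mathcal B_p(\mathbf D)$ is conformally a harmonic-analysis object, I would first transfer everything to the upper half-plane $\mathbf U$ via the Cayley transform $H$, using the identity $\Vert \phi \Vert_{\mathcal B_p}=\Vert \phi_*\Vert_{\mathcal B_p}$ already recorded in the excerpt, and the fact that the boundary double integral on $\mathbf S$ equals the analogous one on $\mathbf R$ up to the conformal change of variables. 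On $\mathbf U=\{\mathrm{Im}\,\zeta>0\}$ the problem becomes the classical characterization of the homogeneous Besov space $\dot B^{1-1/p}_{p,p}(\mathbf R)$ by a Littlewood--Paley/area-integral description of its harmonic (here holomorphic) extension; the weight $(2\,\mathrm{Im}\,\zeta)^{p-2}$ is exactly the one matching the Gagliardo seminorm with exponent $s=1-1/p$.

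The key steps I would carry out are the following. First I would establish the upper bound $\Vert \phi_*|_{\mathbf R}\Vert_{B_p}\leq C_p\Vert \phi_*\Vert_{\mathcal B_p}$ by writing the boundary difference $\phi_*(x_1)-\phi_*(x_2)$ as a contour integral of $\phi_*'$ over a path joining $x_1$ and $x_2$ inside $\mathbf U$ (for instance the two vertical segments up to height $|x_1-x_2|$ and the horizontal segment between them), applying the pointwise growth estimate $|\phi_*'(\zeta)|\lesssim \Vert \phi_*\Vert_{\mathcal B_p}(\mathrm{Im}\,\zeta)^{-1+1/p}$ (which follows from the sub-mean-value property of $|\phi_*'|^p$ on hyperbolic balls and the definition of the norm), and then using Hölder's inequality together with a Fubini/Schur-test argument to bound the resulting double integral. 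Second, for the lower bound $\Vert \phi_*\Vert_{\mathcal B_p}\leq C_p\Vert \phi_*|_{\mathbf R}\Vert_{B_p}$, I would use the Poisson-type reproduction of the holomorphic function from its boundary values: differentiating the representation of $\phi_*$ gives $\phi_*'(\zeta)$ as an integral against a kernel of the boundary increments of $\phi_*|_{\mathbf R}$, and integrating $|\phi_*'(\zeta)|^p(\mathrm{Im}\,\zeta)^{p-2}$ over $\mathbf U$ recovers the Gagliardo double integral after another application of Fubini and Hölder. Both directions are the two halves of the standard equivalence between the intrinsic and the trace norms of the Besov space, so I would cite the explicit computations in Pavlovi\'c \cite{Pa} (Theorems 2.1 and 5.1) precisely where the excerpt already points, and only indicate the conformal transfer to $\mathbf D$ and $\mathbf S$ in detail.

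The main obstacle I anticipate is the lower (coercivity) bound rather than the upper one. The upper bound is a soft consequence of the interior gradient estimate and is essentially a one-line reduction once that estimate is in hand; but recovering the full $\mathcal B_p$ seminorm of $\phi_*$ from only the boundary trace requires that no information is lost in passing to the boundary, which is delicate precisely because $\phi_*$ is only determined up to an additive constant and because the kernel controlling $\phi_*'$ must be integrated against boundary \emph{differences} rather than boundary values. The subtle point is to choose the representation so that the additive constant drops out and the resulting kernel is a genuine Calder\'on--Zygmund-type kernel on which a Schur test or the $L^p$-boundedness of the relevant operator applies with a constant depending only on $p$; the condition $p>1$ enters here, since for $p=1$ the boundary trace theory degenerates. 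For this reason I would lean on \cite{Pa} for the hard half and present the conformal invariance and the Cayley-transform bookkeeping as the genuinely new, self-contained part of the argument.
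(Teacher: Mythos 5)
Your proposal and the paper part ways in an important respect: the paper does not actually prove this lemma at all, but quotes it from \cite[p.131]{Zh} and points to \cite[Th.2.1, 5.1]{Pa} (via \cite[p.505]{Rei}) for an explicit proof. So the portion of your argument that defers to Pavlovi\'c is consistent with what the paper does, and your Cayley-transform bookkeeping is sound: indeed $|H(x_1)-H(x_2)|^2=|H'(x_1)|\,|H'(x_2)|\,|x_1-x_2|^2$ shows the kernel $|dx_1||dx_2|/|x_1-x_2|^2$ is M\"obius invariant, so the $\mathbf S$ and $\mathbf R$ seminorms transfer as claimed. (One slip of bookkeeping: the boundary space with this kernel is $B^{1/p}_{p,p}$, from $1+sp=2$, not $\dot B^{1-1/p}_{p,p}$; the quantity $1-1/p$ is $1-s$, not $s$.) The genuine problem lies in the half you present as self-contained and ``soft,'' namely the trace inequality $\Vert \phi|_{\mathbf S}\Vert_{B_p}\leq C_p\Vert\phi\Vert_{\mathcal B_p}$.

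The pointwise estimate you invoke, $|\phi_*'(\zeta)|\leq C\Vert\phi_*\Vert_{\mathcal B_p}({\rm Im}\,\zeta)^{-1+1/p}$, is false. The sub-mean-value argument on the ball $B(\zeta,{\rm Im}\,\zeta/2)$, where the weight $({\rm Im}\,w)^{p-2}$ is comparable to $({\rm Im}\,\zeta)^{p-2}$, yields only the Bloch-type bound $|\phi_*'(\zeta)|\leq C\Vert\phi_*\Vert_{\mathcal B_p}({\rm Im}\,\zeta)^{-1}$ (the exponents $-2$ and $2-p$ combine to $-p$ before taking $p$-th roots), and no power improvement is possible: on the disk, $\phi'(z)=(1-z)^{-\alpha}$ with $1-1/p<\alpha<1$ gives $\phi\in\mathcal B_p(\mathbf D)$ for every $p>1$, while $|\phi'(r)|(1-r)=(1-r)^{1-\alpha}$ with $1-\alpha<1/p$, so your bound fails as $r\to1$. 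With the correct exponent $-1$, the vertical legs of your contour integral diverge logarithmically, so nothing ``one-line'' survives. Worse, even if your estimate were true it could not yield the conclusion: by Hardy--Littlewood it is equivalent to $\phi$ being H\"older continuous of order $1/p$ on $\overline{\mathbf D}$, and $\Lambda_{1/p}(\mathbf S)\not\subset B_p(\mathbf S)$ --- plugging $|u(x_1)-u(x_2)|\leq C|x_1-x_2|^{1/p}$ into the double integral gives $\int\int|x_1-x_2|^{-1}$, which diverges at the diagonal, and lacunary series such as $\sum_k 2^{-k/p}z^{2^k}$ realize this failure. Hence no pointwise bound on $\phi'$ can drive the trace inequality; that direction necessarily uses the full $L^p$-mass of $\phi'$ (H\"older along the contour against the weight, plus a Hardy-type inequality), which is precisely what \cite[Th.2.1]{Pa} supplies. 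The proposal is repaired by citing \cite{Pa} for both directions, as the paper in effect does; as written, its self-contained portion does not stand.
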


\begin{remark}
For $p=2$, $\mathcal B_2(\mathbf D)$ is nothing but the analytic Dirichlet space
on $\mathbf D$ and the corresponding $B_2(\mathbf S)$ coincides with the Sobolev space $H^{\frac{1}{2}}(\mathbf S)$.
In this case, the Douglas formula gives the equality 
$\Vert \phi|_{\mathbf S} \Vert_{B_p} = 2\sqrt{\pi} \Vert \phi \Vert_{\mathcal B_p}$
(see \cite[Th.2-5]{Ah2}).
\end{remark}

The above lemma is also valid for $\mathcal B_p(\mathbf U)$ and $\mathcal B_p(\mathbf L)$ defined similarly on
the lower half-plane $\mathbf L$ both of which have the Besov space $B_p(\mathbf R)$ on $\mathbf R$ 
as the space of boundary extensions.

In general, any quasisymmetric homeomorphism $f^\mu|_{\mathbf R}$ for $\mu \in M(\mathbf U)$ can be
represented as conformal welding 
$f^\mu|_{\mathbf R}=g_{\bar \mu^{-1}}^{-1} \circ f_{\mu}|_{\mathbf R}$,
where $f_\mu$ is conformal on $\mathbf L$ and has the complex dilatation $\mu$ on $\mathbf U$, 
and $g_{\bar \mu^{-1}}$ is conformal on $\mathbf U$ and has the complex dilatation $\bar \mu^{-1}$ 
of $(\overline{f^{\mu}})^{-1}$
on $\mathbf L$. 
Here, $\overline{f^{\mu}}:\mathbf L \to \mathbf L$ denotes the reflection of 
$f^{\mu}:\mathbf U \to \mathbf U$.
All the mappings are normalized so that they fix $0$, $1$, and $\infty$.

We intrinsically characterize 
a quasisymmetric homeo\-morphism $f^\mu|_{\mathbf R}$ with $\mu \in M_p(\mathbf U)$.
For $p \geq 2$, this is in \cite[Th.1.1]{Sh}, \cite[Th.1.3]{STW}, and \cite[Th.1.2]{TS}.

\begin{theorem}\label{Besov}
A Beltrami coefficient $\mu \in M(\mathbf U)$ 
represents the Teichm\"uller class $[\mu]$ in $T_p$
%belongs to $M_p(\mathbf U)$ 
for $p>1$
if and only if the quasisymmetric homeomorphism $f^\mu|_{\mathbf R}$ 
is locally absolutely continuous and $\log (f^\mu|_{\mathbf R})'$
is in $B_p(\mathbf R)$. The corresponding claim is true for $M(\mathbf D)$ and $B_p(\mathbf S)$.
\end{theorem}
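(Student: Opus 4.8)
The theorem asserts an equivalence: $\mu \in M(\mathbf{U})$ represents a class in $T_p$ if and only if $f^\mu|_{\mathbf{R}}$ is locally absolutely continuous with $\log(f^\mu|_{\mathbf{R}})' \in B_p(\mathbf{R})$. My strategy is to route both implications through the conformal welding decomposition $f^\mu|_{\mathbf{R}} = g_{\bar\mu^{-1}}^{-1} \circ f_\mu|_{\mathbf{R}}$ recalled just before the statement, reducing the boundary regularity of $f^\mu|_{\mathbf{R}}$ to the boundary behavior of the two conformal maps $f_\mu$ (on $\mathbf{L}$) and $g_{\bar\mu^{-1}}$ (on $\mathbf{U}$). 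The point of this decomposition is that each factor is conformal on a half-plane, so Proposition \ref{equivalence} and Lemma \ref{boundary} become directly applicable to $\log f_\mu'$ and $\log g_{\bar\mu^{-1}}'$.

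\textbf{The forward direction.}
First I would assume $[\mu] \in T_p$, so by definition $\mu$ is Teichm\"uller equivalent to some $\nu \in M_p(\mathbf{U})$. By Lemma \ref{TTholo} the Schwarzian $S_{f_\nu}$ lies in $\mathcal{A}_p$, and since the Schwarzian depends only on the Teichm\"uller class, $S_{f_\mu} \in \mathcal{A}_p(\mathbf{U})$ as well. The same reasoning applied to the reflected map gives that the Schwarzian of $g_{\bar\mu^{-1}}$ lies in the corresponding $\mathcal{A}_p$ on $\mathbf{U}$. Now Proposition \ref{equivalence}, in the form relating condition (iii) to (iv), upgrades each Schwarzian bound to the statement that $\log f_\mu'$ and $\log g_{\bar\mu^{-1}}'$ lie in the analytic Besov spaces $\mathcal{B}_p(\mathbf{L})$ and $\mathcal{B}_p(\mathbf{U})$ respectively. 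Lemma \ref{boundary} (and its stated extension to $\mathbf{U}$ and $\mathbf{L}$) then transfers these to boundary functions in $B_p(\mathbf{R})$. Local absolute continuity follows because $\log f_\mu'$ having a boundary function in $B_p(\mathbf{R})$ forces $f_\mu'|_{\mathbf{R}}$ to be locally integrable and nonvanishing, so the welding composition is locally absolutely continuous; finally the additive property $\log(f^\mu|_{\mathbf{R}})' = \log f_\mu'|_{\mathbf{R}} - (\log g_{\bar\mu^{-1}}'|_{\mathbf{R}})\circ(g_{\bar\mu^{-1}}^{-1}\circ f_\mu)$ together with the bi-Lipschitz (hence $B_p$-bounded) behavior of the welding map places $\log(f^\mu|_{\mathbf{R}})'$ in $B_p(\mathbf{R})$.

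\textbf{The converse.}
For the reverse implication I would start from $\log(f^\mu|_{\mathbf{R}})' \in B_p(\mathbf{R})$ with $f^\mu|_{\mathbf{R}}$ locally absolutely continuous, and run the same welding machinery backwards. The task is to recover conformal maps on the two half-planes whose logarithmic derivatives extend the given boundary data, then invoke the reverse directions of Lemma \ref{boundary} and Proposition \ref{equivalence} to deduce $S_{f_\mu} \in \mathcal{A}_p(\mathbf{U})$, equivalently $\Phi(\mu) \in \mathcal{A}_p$. Once the Schwarzian lies in $\mathcal{A}_p$, Theorem \ref{Pcomplex} identifies $[\mu]$ as a point of $\beta(T_p) = \Phi(M_p(\mathbf{U}))$, which is exactly the assertion $[\mu] \in T_p$. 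The $\mathbf{D}$/$\mathbf{S}$ version is obtained verbatim by transporting everything through the Cayley transform $H$, under which all four function-space norms are invariant by construction.

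\textbf{The main obstacle.}
The delicate point is the converse, specifically producing the factorization of a \emph{given} boundary homeomorphism into two half-plane-conformal pieces with controlled logarithmic derivatives. The Besov condition on $\log(f^\mu|_{\mathbf{R}})'$ is intrinsic to $f^\mu|_{\mathbf{R}}$, but the welding factors $f_\mu$ and $g_{\bar\mu^{-1}}$ are not canonically determined by $f^\mu|_{\mathbf{R}}$ alone without first knowing $[\mu] \in T_p$; this risks a circularity. I expect the resolution to proceed by constructing an \emph{a priori} quasiconformal extension whose dilatation is manufactured from the boundary data, showing its Schwarzian lies in $\mathcal{A}_p$ directly from $\log(f^\mu|_{\mathbf{R}})' \in B_p(\mathbf{R})$ via Lemma \ref{boundary} and Proposition \ref{equivalence}, and only afterward appealing to bi-Lipschitz extension (Lemma \ref{biLip}) and the complex structure (Theorem \ref{Pcomplex}) to conclude membership in $T_p$. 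Handling this decoupling cleanly, while keeping all constants depending only on $p$ and $\Vert\mu\Vert_\infty$, is where the real work lies.
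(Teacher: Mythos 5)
Your forward direction follows the paper's route (conformal welding, Lemma~\ref{TTholo}, Proposition~\ref{equivalence}, Lemma~\ref{boundary}, then the chain rule for logarithms), but it glosses over the step the paper does first and for a reason: before anything else one must replace $\mu$ by a representative whose extension $f^\mu$ is bi-Lipschitz in the hyperbolic metric (Lemma~\ref{biLip}), because only then does Lemma~\ref{group} give $\bar\mu^{-1}\in M_p(\mathbf L)$. Your phrase ``the same reasoning applied to the reflected map'' hides this: the complex dilatation of the \emph{inverse} of a map with $p$-integrable dilatation is not automatically $p$-integrable, so Lemma~\ref{TTholo} cannot be applied to $g_{\bar\mu^{-1}}$ until that is established; you never invoke Lemma~\ref{group} at all. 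A second flaw: you justify that $(\log g_{\bar\mu^{-1}}'|_{\mathbf R})\circ f^\mu|_{\mathbf R}$ stays in $B_p(\mathbf R)$ by the ``bi-Lipschitz (hence $B_p$-bounded)'' behavior of the welding map, but hyperbolic bi-Lipschitz on $\mathbf U$ only makes the boundary trace quasisymmetric, not Euclidean bi-Lipschitz on $\mathbf R$; the paper instead cites Bourdaud \cite[Th.12]{Bo} for the invariance of $B_p(\mathbf R)$ under this change of variable. (Also, local absolute continuity of the boundary values and the identification of nontangential limits of $(f_\mu)'$ with the derivative of the boundary function are not free; the paper cites \cite[Lem.3.2]{WM-4} and \cite[Th.5.5]{Pom}.)

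The genuine gap is the converse. You correctly diagnose the circularity --- the welding factors $f_\mu$ and $g_{\bar\mu^{-1}}$ are not recoverable from the boundary data alone without already knowing $[\mu]\in T_p$ --- but you then only state an ``expectation'' that some a priori extension manufactured from the boundary data will work, and provide no such construction. Reversing the forward machinery cannot succeed, and this construction is precisely where the substance of the theorem lies. The paper closes this direction by citing \cite[Th.4.5]{WM-3}: the variant of the Beurling--Ahlfors extension by the heat kernel, due to Fefferman, Kenig and Pipher \cite{FKP}, extends any quasisymmetric homeomorphism $f$ of $\mathbf R$ with $\log f'\in B_p(\mathbf R)$, $p>1$, to a quasiconformal self-homeomorphism of $\mathbf U$ whose complex dilatation lies in $M_p(\mathbf U)$, which immediately yields $[\mu]\in T_p$ (the disk version of the ``if'' part being \cite[Th.5.2]{WM-3}). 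Without this extension theorem, or an equivalent explicit construction with the dilatation estimate, your proof of the ``if'' direction is incomplete.
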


\begin{proof}
By Lemma \ref{biLip}, we may assume that $f^\mu$ is bi-Lipschitz.
We consider conformal welding 
\begin{equation}\label{welding}
f^\mu|_{\mathbf R}=g_{\bar \mu^{-1}}^{-1} \circ f_{\mu}|_{\mathbf R}.
\end{equation}
Lemma \ref{group} implies that $\bar \mu^{-1} \in M_p(\mathbf L)$.
Then, by Lemma \ref{TTholo} and Proposition \ref{equivalence}, we have $\log (f_\mu)' \in \mathcal B_p(\mathbf L)$
and $\log (g_{\bar \mu^{-1}})' \in \mathcal B_p(\mathbf U)$, and by Lemma \ref{boundary}, 
$\log (f_\mu)'|_{\mathbf R} \in B_p(\mathbf R)$
and $\log (g_{\bar \mu^{-1}})' |_{\mathbf R} \in B_p(\mathbf R)$.

Moreover,
the non-tangential limits of $(f_\mu)'$ and $(g_{\bar \mu^{-1}})'$ coincide with the angular derivatives of
$f_\mu$ and $g_{\bar \mu^{-1}}$, and in fact, the limits for the derivatives are allowed to be taken 
without 
restriction (see \cite[Th.5.5]{Pom}). This implies that
$\log (f_\mu)'|_{\mathbf R}=\log (f_\mu|_{\mathbf R})'$ and
$\log (g_{\bar \mu^{-1}})' |_{\mathbf R}=\log (g_{\bar \mu^{-1}}|_{\mathbf R})'$.

By \eqref{welding},
we see that
$f^\mu|_{\mathbf R}$ is locally absolutely continuous because so are $f_{\mu}|_{\mathbf R}$ and $g_{\bar \mu^{-1}}|_{\mathbf R}$
(see \cite[Lem.3.2]{WM-4})
with $(g_{\bar \mu^{-1}}|_{\mathbf R})'>0$ a.e. Hence,
\begin{equation}\label{log}
\log (g_{\bar \mu^{-1}}|_{\mathbf R} )'\circ f^\mu|_{\mathbf R}+\log (f^\mu|_{\mathbf R})'=\log(f_{\mu}|_{\mathbf R})'.
\end{equation}
The first term of the left side of \eqref{log} also belongs to $B_p(\mathbf R)$
(see \cite[Th.12]{Bo}). Thus, $\log (f^\mu|_{\mathbf R})' \in B_p(\mathbf R)$.

Conversely, \cite[Th.4.5]{WM-3} asserts that 
the variant of Beurling--Ahlfors extension by the heat kernel
due to Fefferman, Kenig and Pipher \cite{FKP} (see also \cite{WM-2}) extends such a quasisymmetric homeomorphism $f$ of $\mathbf R$ with
$\log f' \in B_p(\mathbf R)$ for $p>1$
to a quasiconformal self-homeomorphism of $\mathbf U$
with its complex dilatation in $M_p(\mathbf U)$.

In the unit disk case, the proof for the ``only-if'' part is the same. The ``if'' part
is in \cite[Th.5.2]{WM-3}.
\end{proof}

The correspondence $\mu \mapsto \log (f^\mu|_{\mathbf R})'$ in Theorem \ref{Besov} 
induces an injection
$L:T_p \to B_p^{\mathbf R}(\mathbf R)$, where $B_p^{\mathbf R}(\mathbf R)$ is the real Banach subspace of
$B_p(\mathbf R)$ consisting of real-valued functions.
This is continuous, and in fact, real-analytic
as shown in \cite{WM-4}. 
Conversely, \cite[Th.4.4]{WM-3} proves that the variant of Beurling--Ahlfors extension 
%by the heat kernel 
yields a real-analytic map
$\Lambda:B_p^{\mathbf R}(\mathbf R) \to M_p(\mathbf U)$ such that $\pi \circ \Lambda$ is the inverse of $L$.
%For every $\nu \in \Lambda(B_p^{\mathbf R}(\mathbf R))$, $f^\nu$ is bi-Lipschitz by \cite[Prop.3.2]{WM-3},
%and hence $\pi$ is continuous at $\nu$ by Lemma \ref{group}. 
This shows that
$L$ is a bi-real-analytic homeomorphism. 

\begin{proposition}
The Teichm\"uller space $T_p$ is real-analytically equi\-valent to $B_p^{\mathbf R}(\mathbf R)$ for $p>1$. 
In particular, $T_p$ is 
contractible.
\end{proposition}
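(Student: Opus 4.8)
The plan is to observe that all the substantive analytic work has already been carried out in the paragraph preceding the statement, and that what remains is to assemble the stated facts into a clean equivalence together with its topological consequence. Recall that the correspondence $\mu \mapsto \log(f^\mu|_{\mathbf R})'$ induces the map $L:T_p \to B_p^{\mathbf R}(\mathbf R)$, which is real-analytic by \cite{WM-4}, and that the variant of the Beurling--Ahlfors extension yields a real-analytic map $\Lambda:B_p^{\mathbf R}(\mathbf R) \to M_p(\mathbf U)$ with $\pi \circ \Lambda = L^{-1}$ by \cite{WM-3}. The goal is therefore to package these into the assertion that $L$ is a bi-real-analytic isomorphism and then to read off contractibility.

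First I would confirm that $L$ is a bijection. Injectivity is immediate, since two Teichm\"uller classes with the same boundary log-derivative coincide. For surjectivity, take any $u \in B_p^{\mathbf R}(\mathbf R)$; then $\Lambda(u) \in M_p(\mathbf U)$, so $\pi(\Lambda(u)) \in T_p$, and the relation $\pi \circ \Lambda = L^{-1}$ gives $L(\pi(\Lambda(u))) = u$. The genuine substance behind this realizability — that every such $u$ actually arises from some class in $T_p$ — is exactly the characterization in Theorem \ref{Besov}. Thus $L$ is a bijection whose inverse is $\pi \circ \Lambda$.

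Next I would check that both $L$ and its inverse are real-analytic between the relevant real-analytic manifolds. The map $L$ is real-analytic by the cited result, and $T_p$ carries a real-analytic structure as the underlying structure of the complex Banach manifold of Theorem \ref{Pcomplex}. For the inverse, $\pi:M_p(\mathbf U) \to T_p$ is holomorphic, hence real-analytic, by Theorem \ref{Pcomplex} transported from $\mathbf D$ to $\mathbf U$, and $\Lambda$ is real-analytic; therefore $L^{-1} = \pi \circ \Lambda$ is real-analytic. Consequently $L$ is a bi-real-analytic isomorphism, so $T_p$ and $B_p^{\mathbf R}(\mathbf R)$ are real-analytically equivalent.

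Finally, for contractibility I would argue that $B_p^{\mathbf R}(\mathbf R)$, being a real Banach space, is contractible via the linear homotopy $(u,t) \mapsto tu$ onto the origin; since contractibility is preserved under homeomorphism and $L$ is in particular a homeomorphism, $T_p$ is contractible as well. I do not expect a genuine obstacle here: the analytic difficulties have been absorbed into the construction and real-analyticity of $L$ and $\Lambda$ in \cite{WM-4} and \cite{WM-3}, so the present argument is essentially bookkeeping. The only point requiring slight care is confirming that the inverse is genuinely real-analytic rather than merely continuous, which follows at once from writing it as $\pi \circ \Lambda$ and invoking the holomorphy of $\pi$.
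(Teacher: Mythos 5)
Your proposal is correct and follows essentially the same route as the paper: the paper's (implicit) proof likewise assembles the real-analyticity of $L$ from \cite{WM-4}, the real-analytic map $\Lambda$ from \cite[Th.4.4]{WM-3} with $\pi\circ\Lambda=L^{-1}$, and the holomorphy of $\pi$ from Theorem \ref{Pcomplex} to conclude that $L$ is a bi-real-analytic homeomorphism, with contractibility inherited from the Banach space $B_p^{\mathbf R}(\mathbf R)$. Your added care in separating injectivity/surjectivity (via Theorem \ref{Besov}) and in noting that the inverse is genuinely real-analytic, not merely continuous, matches the paper's intent exactly.
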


\begin{remark}
For $p \geq 2$, the barycentric extension yields a global real-analytic right inverse for the
Teichm\"uller projection $\pi:M_p(\mathbf D) \to T_p$, from which we know that $T_p$ is contractible.
\end{remark}

\end{document}